\documentclass[12pt]{article}

\usepackage{amsmath, amsthm, amssymb, amsfonts}
\usepackage{mathrsfs}
\usepackage[margin=1in]{geometry}
\usepackage[colorlinks=true, linkcolor=blue, citecolor=blue,
            urlcolor=blue]{hyperref}
\usepackage{enumitem}
\usepackage{booktabs}
\usepackage{cite}

\theoremstyle{plain}
\newtheorem{theorem}{Theorem}[section]
\newtheorem{lemma}[theorem]{Lemma}
\newtheorem{corollary}[theorem]{Corollary}

\theoremstyle{definition}
\newtheorem{definition}[theorem]{Definition}

\theoremstyle{remark}
\newtheorem{remark}[theorem]{Remark}

\newcommand{\R}{\mathbb{R}}
\newcommand{\N}{\mathbb{N}}
\newcommand{\No}{\mathbb{N}_0}

\newcommand{\dee}{\mathrm{d}}
\newcommand{\e}{\mathrm{e}}

\newcommand{\dfact}[2]{(#1)_{#2,\lambda}}
\newcommand{\expdeg}{\e_\lambda}
\newcommand{\Tdeg}[1]{T_{#1,\lambda}[\expdeg]}
\newcommand{\Rdeg}[1]{R_{#1,\lambda}[\expdeg]}

\title{\bfseries Threshold Phenomena and Bounds in Normalized
  Remainders of Degenerate Exponential Functions}

\author{Artatrana Suna and Prasanta Kumar Ray$^{}$%
  \footnote{Corresponding author}\\[6pt]
  \small Department of Mathematics, Sambalpur University,\\[-2pt]
  \small Jyoti Vihar, Burla, Sambalpur, Odisha 768019, India\\[4pt]
  \small\texttt{suna.19972702@gmail.com},\quad
  \texttt{prasantamath@suniv.ac.in}}

\date{}

\begin{document}

\maketitle
\thispagestyle{empty}

\noindent\rule{\textwidth}{0.4pt}
\begin{abstract}
\noindent In this work, we study a normalized remainder $T_{n,\lambda}[\e_\lambda]$ for the
degenerate exponential $\e_\lambda(u)=(1+\lambda u)^{1/\lambda}$ ($\lambda>0$).
We establish an integral representation, an exact monotonicity threshold at
$\lambda=1/(n+1)$, and rigorous conditions for the local failure of logarithmic
convexity at the origin. We then prove a sharp asymptotic result: for every
$\lambda$ in the increasing regime $(0,1/(n+1))$, the second logarithmic
derivative satisfies $u^2L(u)\to -\alpha<0$ as $u\to\infty$, showing that
global logarithmic convexity on $(0,\infty)$ fails throughout this regime. We further give a necessary and sufficient condition for absolute monotonicity, showing it holds only on a countable, measure-zero set of parameters, and we derive explicit two-sided truncation-error bounds that are pointwise sharp at the origin.

\medskip
\noindent\textit{Keywords:} Degenerate exponential function;
normalized remainder; threshold phenomenon; logarithmic convexity;
asymptotic analysis; absolute monotonicity; approximation bounds;
truncation error

\medskip
\noindent\textit{2020 MSC:} Primary 33B10, 26A48; Secondary 41A58
\end{abstract}
\noindent\rule{\textwidth}{0.4pt}
\bigskip

%% ==================================================================
\section{Introduction}
%% ==================================================================

The study of remainders in Taylor expansions of $\e^{u}$ has a rich
history with applications in approximation theory, inequalities, and
special functions. The systematic investigation of normalized remainders
was initiated by Qi~\cite{Qi2024}, who established
absolute monotonicity of the normalized tail of the Maclaurin series
of $\e^u$. Subsequently, the positivity, monotonicity, logarithmic convexity, and absolute monotonicity were proven for the normalized remainders $T_n[\e^{u}]$ in~\cite{LiQi2025}. Further properties of the normalized tails were established in~\cite{BaoAgarwalQiDu2024}.
This program has since become an active research area; relevant
contributions include the study of normalized remainders of
the cosine and sine functions~\cite{WanQi2024,WangQi2024Sine},
of the tangent function and its
square~\cite{ZhangQi2025Tan,LiuLongQi2025},
of the generating function of the Bernoulli
numbers~\cite{ZhangYangQi2025},
and of a function arising from the reciprocal of the gamma
function~\cite{ZhangQi2025Gamma}.
A monotonicity result for the ratio of two normalized remainders of
the exponential function was established by Zhang and
Qi~\cite{ZhangQi2025}, and a systematic survey of the entire
normalized-remainder program, including open problems, is given
in~\cite{QiSurvey2025}.
The background on logarithmic convexity and complete monotonicity of functions involving
exponential functions and ratios of gamma functions is treated in depth in~\cite{QiGuo2021JMAA}.

Degenerate versions of special functions, obtained by replacing
$\e^{u}$ with $(1+\lambda u)^{1/\lambda}$, have attracted considerable
interest in combinatorics and number theory since Carlitz's
foundational work~\cite{Carlitz1956,Carlitz1979}. The degenerate exponential $\e_\lambda(u)=(1+\lambda u)^{1/\lambda}$ is
the foundational object of the theory. It generates the degenerate falling
factorials and underlies the construction of degenerate analogues of most
classical polynomial families~\cite{KimKim2020JMAA,KimKim2022}.
A substantial literature on degenerate Bernoulli, Euler, Stirling,
Bell, and related objects has developed in the past decade;
representative contributions appear
in~\cite{KimKimKwonLee2020,KimKim2021Sheffer,
HernandezPeraltaQuintana2023,Cesarano2024,ChenKimKimLeeLee2024}.
Properties of the degenerate Laplace transform and degenerate gamma
function were studied in~\cite{KimKim2017Laplace}.

In this paper we bridge these two active areas. We define a normalized
remainder for the degenerate exponential function and investigate its
analytic properties.
The following features are structurally new to the degenerate setting
and have no classical parallel. The monotonicity threshold at the
$n$-dependent value $\lambda=1/(n+1)$ (beyond which $\Tdeg{n}$ becomes
strictly decreasing), the complete failure of global logarithmic
convexity for every $\lambda$ in the increasing regime $(0,1/(n+1))$,
the restriction of absolute monotonicity to a countable parameter set,
and the power-type rather than exponential asymptotic growth.
The basic definitions and the existence of monotonicity and convexity
thresholds are natural analogues of the classical theory. However,
the precise location of these $n$-dependent thresholds, and the question
of whether logarithmic convexity can be recovered globally for small
$\lambda$ in the increasing regime $(0,1/(n+1))$, require new analysis.
We answer the latter question negatively for every $\lambda$ in the
increasing regime (Theorem~\ref{thm:asymptotic-log-concave},
Corollary~\ref{cor:unified-log-concave}). Unlike the classical case,
where $T_n[\e^u]$ is globally logarithmically convex on $(0,\infty)$
for every $n$, the degenerate remainder $\Tdeg{n}$ fails global
logarithmic convexity on $(0,\infty)$ for every
$\lambda\in(0,1/(n+1))$.
The asymptotic growth of the remainder is of power type,
$\Tdeg{n}(u)\sim Cu^{\alpha}$ as $u\to\infty$, in contrast to the
exponential-type growth of the classical case. Moreover, the decreasing
regime $\lambda>1/(n+1)$ and the boundary $\lambda=1/(n+1)$ are
discussed separately in Section~\ref{sec:log-convexity}.

The main contributions of our study are as follows.
\begin{enumerate}
\item An integral representation for the normalized degenerate
  remainder (Theorem~\ref{thm:integral-rep}).
\item An exact monotonicity threshold at $\lambda=1/(n+1)$
  (Theorem~\ref{thm:monotonicity-threshold}).
\item Conditions for local failure of logarithmic convexity at the
  origin (Theorem~\ref{thm:log-convex-failure}) and an asymptotic
  theorem showing global failure throughout the increasing regime
  at rate $u^2L(u)\to-\alpha$ (Theorem~\ref{thm:asymptotic-log-concave},
  Corollary~\ref{cor:unified-log-concave}).
\item A necessary and sufficient condition for absolute monotonicity
  (Theorem~\ref{thm:abs-monotonicity}), and the consequence that
  absolute monotonicity holds only on a countable, measure-zero
  parameter set (Corollary~\ref{cor:ae-failure}).
\item Two-sided bounds pointwise sharp at the origin
  (Theorem~\ref{thm:bounds}), with explicit truncation-error
  estimates for $\expdeg$ (Corollary~\ref{cor:truncation-error}).
\item Recovery of classical properties in the $\lambda\to0^+$ limit
  (Theorem~\ref{thm:classical-limit}).
\end{enumerate}

Table~\ref{tab:comparison} summarises the key properties
of the normalized remainder in the classical and degenerate settings.

\begin{table}[ht]
\centering
\renewcommand{\arraystretch}{1.25}
\begin{tabular}{lcc}
\toprule
Property & Classical ($T_n[\e^u]$) & Degenerate ($\Tdeg{n}$, $\lambda>0$) \\
\midrule
Positivity on $(0,\infty)$
  & Always & Always (Corollary~\ref{cor:positivity}) \\
Increasing on $(0,\infty)$
  & Always & Iff $\lambda<\tfrac{1}{n+1}$ (Theorem~\ref{thm:monotonicity-threshold}) \\
Global log-convexity on $(0,\infty)$
  & Always & Never in $(0,\tfrac{1}{n+1})$; trivial at $\tfrac{1}{n+1}$ \\
  & & (Corollary~\ref{cor:unified-log-concave}) \\
Asymptotic growth
  & Exponential ($\sim e^u$) & Power-type ($\sim Cu^\alpha$) \\
Absolute monotonicity
  & Always (Qi~\cite{Qi2024}) & Countable, measure-zero set of $\lambda$ \\
  & & (Theorem~\ref{thm:abs-monotonicity}) \\
Upper bound (for $u>0$)
  & $\e^u$ & $(1+\lambda u)^{\alpha_n}$ (Theorem~\ref{thm:bounds}) \\
\bottomrule
\end{tabular}
\caption{Comparison of the classical normalized remainder $T_n[\e^u]$
and the degenerate remainder $\Tdeg{n}=T_{n,\lambda}[\e_\lambda]$.
The ``increasing'' and ``log-convexity'' rows refer to the increasing
regime $\lambda\in(0,1/(n+1))$ for the degenerate case.}
\label{tab:comparison}
\end{table}

%% ==================================================================
\section{Preliminaries}\label{sec:preliminaries}
%% ==================================================================

We write $\N=\{1,2,3,\dotsc\}$ and $\No=\{0,1,2,\dotsc\}$.
For $\lambda>0$, the \emph{degenerate exponential function} is
\[
  \expdeg(u)=(1+\lambda u)^{1/\lambda},\qquad u\in(-1/\lambda,\infty).
\]
As $\lambda\to0^+$, $\expdeg(u)\to\e^u$, recovering the classical
exponential. The degenerate exponential plays a central role in the
theory of degenerate polynomial families; we refer to~\cite{KimKim2020JMAA,KimKim2021Sheffer} for its use in defining
the degenerate polyexponential functions and the degenerate Sheffer
sequence framework.

\begin{remark}
  Throughout this paper we take $\lambda>0$. The case $\lambda<0$
  is structurally different, not merely a sign change. For
  $\lambda<0$ the positivity constraint $1+\lambda u>0$ forces
  $u<-1/\lambda=1/|\lambda|$, so the natural domain of $\expdeg$
  becomes the left-unbounded interval
  $(-\infty,\,1/|\lambda|)$. Consequently, several arguments
  in this paper that rely on letting $u\to+\infty$ within the domain,
  most notably the asymptotic analysis underlying
  Theorem~\ref{thm:asymptotic-log-concave}, do not carry over to
  $\lambda<0$ without modification.
\end{remark}

\begin{definition}\label{def:factorials}
  For $x\in\R$ and $n\in\No$, the \emph{degenerate falling factorial}
  is defined as~\cite{Carlitz1979}
  \[
    \dfact{x}{n}=\prod_{k=0}^{n-1}(x-k\lambda).
  \]
See also~\cite{KimKim2020JMAA,KimKim2022} for its role in the
modern theory of degenerate special functions.
\end{definition}

\begin{lemma}[Series expansion {\cite{Carlitz1956,Carlitz1979,KimKim2020JMAA}}]
\label{lem:series-expansion}
  For $u\in(-1/\lambda,1/\lambda)$, i.e.\ $|\lambda u|<1$,
  \[
    (1+\lambda u)^{x/\lambda}
    =\sum_{j=0}^{\infty}\dfact{x}{j}\,\frac{u^{j}}{j!}.
  \]
  In particular, $\expdeg(u)=\sum_{j=0}^{\infty}\dfact{1}{j}\,
  u^{j}/j!$ for $|\lambda u|<1$.
\end{lemma}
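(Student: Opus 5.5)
The plan is to obtain the expansion from the generalized binomial theorem and then identify the coefficients. Set $a=x/\lambda$ and $t=\lambda u$, so $|t|<1$. Newton's binomial series gives, for every real $a$,
\[
  (1+t)^{a}=\sum_{j=0}^{\infty}\binom{a}{j}t^{j},
  \qquad
  \binom{a}{j}=\frac{a(a-1)\cdots(a-j+1)}{j!},
\]
with absolute convergence on $|t|<1$. One can justify this either by the ratio test, since $|\binom{a}{j+1}/\binom{a}{j}|=|a-j|/(j+1)\to1$, or by noting that $(1+t)^a$ is analytic on the unit disc with Taylor coefficients $\binom{a}{j}$. To make the argument self-contained I would show that the series $f(t)$ satisfies $(1+t)f'(t)=af(t)$ and $f(0)=1$. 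Then $(1+t)^{-a}f(t)$ has zero derivative on $(-1,1)$, so it is identically $1$.

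Next I would substitute. The coefficient of $u^j$ becomes
\[
  \binom{x/\lambda}{j}\lambda^{j}
  =\frac{1}{j!}\prod_{k=0}^{j-1}\lambda\Bigl(\frac{x}{\lambda}-k\Bigr)
  =\frac{1}{j!}\prod_{k=0}^{j-1}(x-k\lambda)
  =\frac{\dfact{x}{j}}{j!},
\]
which matches Definition~\ref{def:factorials}, with the empty product equal to $1$ when $j=0$. The condition $|t|<1$ is exactly $|\lambda u|<1$. Taking $x=1$ gives the stated expansion of $\expdeg(u)$.

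The only real obstacle is justifying the binomial series for a non-integer exponent. The ODE uniqueness argument above settles it, and termwise differentiation is legitimate inside the radius of convergence. When $x/\lambda\in\No$ the series terminates, but the formula remains valid without any change.
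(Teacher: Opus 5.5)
Your proposal is correct and takes the same route as the paper: apply the generalized binomial theorem with $t=\lambda u$ and $a=x/\lambda$, then identify the coefficient $\binom{x/\lambda}{j}\lambda^{j}=\dfact{x}{j}/j!$. The only difference is that you also justify Newton's series for non-integer exponents, via the ODE $(1+t)f'=af$, $f(0)=1$; the paper simply takes that series as known.
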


\begin{proof}
The generalized binomial theorem gives $(1+y)^r=\sum_{j=0}^\infty\binom{r}{j}y^j$
for $|y|<1$, where $\binom{r}{j}=r(r-1)\cdots(r-j+1)/j!$.
Setting $y=\lambda u$ and $r=x/\lambda$, the $j$-th binomial coefficient
becomes $\binom{x/\lambda}{j}=\dfact{x}{j}/(\lambda^j\,j!)$, so the
$j$-th term equals $\dfact{x}{j}\,u^j/j!$, valid for $|\lambda u|<1$.
\end{proof}
\begin{remark}
The radius of convergence for $\expdeg(u)$ in $u$ is $1/\lambda$.
Convergence on the boundary $|\lambda u|=1$ depends on $x/\lambda$
and is not considered here. Note that $\expdeg(u)$ is defined on the
strictly larger domain $(-1/\lambda,\infty)$; the series represents
$\expdeg$ only on $|\lambda u|<1$.
\end{remark}

The following lemma establishes, once and for all, the uniform bound
needed to justify differentiation under the integral sign. It is
required in the proofs of Theorems~\ref{thm:monotonicity-threshold},
\ref{thm:log-convex-failure}, and~\ref{thm:abs-monotonicity}.

\begin{lemma}[Uniform domination lemma]\label{lem:domination}
For a fixed $\lambda>0$ and $[a,b]\subset(-1/\lambda,\infty)$, let
$m=\min(1,\,1+\lambda a)>0$ and $M=\max(1,\,1+\lambda b)<\infty$.
  Then $0<m\le1+\lambda uv\le M<\infty$ for all $u\in[a,b]$ and
  $v\in[0,1]$. Consequently, for every $\beta\in\R$,
  \[
    \bigl|(1+\lambda uv)^{\beta}\bigr|\le K_\beta<\infty
    \qquad\text{for all }u\in[a,b],\ v\in[0,1],
  \]
  where $K_\beta=M^{\beta}$ if $\beta\ge0$ and $K_\beta=m^{\beta}$
  if $\beta<0$.
\end{lemma}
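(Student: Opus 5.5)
The plan is to bound the linear expression $1+\lambda uv$ between the constants $m$ and $M$, and then transfer that bound to the power $(1+\lambda uv)^\beta$ using monotonicity of $t\mapsto t^\beta$ on $(0,\infty)$.

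\emph{Step 1: the two-sided bound on $1+\lambda uv$.} For fixed $u$, the map $v\mapsto 1+\lambda uv$ is affine on $[0,1]$. It therefore takes values between its endpoint values, $1$ at $v=0$ and $1+\lambda u$ at $v=1$. Hence
\[
  \min(1,1+\lambda u)\le 1+\lambda uv\le \max(1,1+\lambda u).
\]
Next, $u\mapsto 1+\lambda u$ is increasing because $\lambda>0$, so for $u\in[a,b]$ we have $1+\lambda a\le 1+\lambda u\le 1+\lambda b$. Combining the two bounds gives $m\le 1+\lambda uv\le M$.

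\emph{Step 2: positivity and finiteness of $m$ and $M$.} Since $a>-1/\lambda$, we get $1+\lambda a>0$, so $m>0$. Finiteness of $M$ is immediate because $b<\infty$.

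\emph{Step 3: the bound on the power.} Since $1+\lambda uv>0$, the power $(1+\lambda uv)^\beta$ is well defined and positive, so the absolute value can be dropped. I would then split according to the sign of $\beta$.
\begin{itemize}
  \item If $\beta\ge0$, then $t\mapsto t^\beta$ is nondecreasing on $(0,\infty)$. This gives $(1+\lambda uv)^\beta\le M^\beta$.
  \item If $\beta<0$, then $t\mapsto t^\beta$ is decreasing on $(0,\infty)$. This gives $(1+\lambda uv)^\beta\le m^\beta$, which is finite because $m>0$.
\end{itemize}

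\emph{Main obstacle.} There is no serious obstacle. The only point needing care is the case $a<0$, where $1+\lambda a<1$ and the product $uv$ is negative. This is precisely why $m$ is defined as $\min(1,1+\lambda a)$ and not simply $1+\lambda a$. The hypothesis $a>-1/\lambda$ is what keeps $m$ strictly positive, and that positivity is essential for the $\beta<0$ case.
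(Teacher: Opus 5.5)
Your proof is correct and takes essentially the same approach as the paper. Both arguments bound the affine expression $1+\lambda uv$ by its endpoint values and then transfer the bound to $(1+\lambda uv)^\beta$ using the monotonicity of $s\mapsto s^\beta$ on $(0,\infty)$, split by the sign of $\beta$; the only difference is that you reduce in $v$ first and then use that $\min(1,\cdot)$ and $\max(1,\cdot)$ are nondecreasing in $u$, whereas the paper reduces in $u$ first and then in $v$.
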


\begin{proof}
  Since $a>-1/\lambda$, $1+\lambda a>0$, so $m>0$; clearly $M<\infty$.
  For each fixed $v\in[0,1]$, the map $u\mapsto1+\lambda uv$ is
  linear in $u$ with slope $\lambda v\ge0$, hence non-decreasing in $u$.
  Thus, its minimum and maximum
  over $u\in[a,b]$ are attained respectively at $u=a$ and $u=b$.
  Both boundary curves $v\mapsto1+\lambda av$ and
  $v\mapsto1+\lambda bv$ are linear in $v$, hence attain their
  extrema on $[0,1]$ at the endpoints $v=0,1$. Since the map
  $v\mapsto1+\lambda av$ is positive at both endpoints, we have
  \[
  \min_{v\in[0,1]}(1+\lambda av)=\min(1,1+\lambda a)=m
  \]
  and similarly
  \[
  \max_{v\in[0,1]}(1+\lambda bv)=\max(1,1+\lambda b)=M.
  \]
  Combining the above inequalities gives $0<m\le1+\lambda av\le1+\lambda uv\le1+\lambda bv\le
  M<\infty$ for all $u\in[a,b]$, $v\in[0,1]$.

  Finally, the map $s\mapsto s^\beta$ is increasing on $(0,\infty)$
  if $\beta\ge0$, giving $(1+\lambda uv)^\beta\le M^\beta$, and
  decreasing if $\beta<0$, giving $(1+\lambda uv)^\beta\le m^\beta$.
\end{proof}

%% ==================================================================
\section{Definition and Integral Representation}
\label{sec:definition}
%% ==================================================================

For $n\in\No$, let
  \[
    \Lambda_n=\bigl\{1/k\mid k\in\N,\;1\le k\le n\bigr\}
  \]
  denote the set of values of $\lambda$ at which
  $\dfact{1}{n+1}=\prod_{k=0}^{n}(1-k\lambda)$ vanishes.
  Note that $\Lambda_0=\emptyset$.

\begin{definition}[Degenerate normalized remainder]
  \label{def:degenerate-remainder}
  For $\lambda>0$ with $\lambda\notin\Lambda_n$ and
  $u\in(-1/\lambda,\infty)$, define
  \[
    \Tdeg{n}(u):=\begin{cases}
      \displaystyle
      \frac{(n+1)!}{\dfact{1}{n+1}}\cdot
      \frac{\Rdeg{n}(u)}{u^{n+1}},&u\ne0,\\[14pt]
      1,&u=0,
    \end{cases}
  \]
  where $\Rdeg{n}(u)=\expdeg(u)-\sum_{j=0}^{n}\dfact{1}{j}\,u^{j}/j!$
  is the $n$th remainder of the degenerate exponential series.
  Equivalently, for $u\ne0$ and $\lambda\notin\Lambda_n$,
  \[
    \Rdeg{n}(u)=\frac{\dfact{1}{n+1}}{(n+1)!}\,u^{n+1}\,\Tdeg{n}(u),
  \]
  so $\Tdeg{n}$ is the remainder $\Rdeg{n}$ divided by its leading
  Taylor coefficient $\dfact{1}{n+1}/(n+1)!$ and by $u^{n+1}$,
  normalised so that $\Tdeg{n}(0)=1$.
\end{definition}

\begin{remark}[Continuation in $\lambda$]
  \label{rem:analytic-continuation}
  The algebraic formula in Definition~\ref{def:degenerate-remainder}
  is valid only for $\lambda\notin\Lambda_n$; at $\lambda\in\Lambda_n$
  (when $n\ge1$) the factor $\dfact{1}{n+1}$ vanishes and the ratio
  presents a $0/0$ indeterminacy. The integral
  representation established in Theorem~\ref{thm:integral-rep} below
  is well-defined and continuous in $\lambda$ for all $\lambda>0$
  and provides the continuation to $\lambda\in\Lambda_n$.
  Since every subsequent result uses only this integral representation,
  no separate argument is needed at the exceptional values.
\end{remark}

\begin{theorem}[Integral representation]\label{thm:integral-rep}
  For $\lambda\notin\Lambda_n$ and $u\in(-1/\lambda,\infty)$, the
  quantity $\Tdeg{n}(u)$
  satisfies
  \begin{equation}\label{eq:int-rep-formula}
    \Tdeg{n}(u)
    =(n+1)\int_0^1(1-v)^n\,(1+\lambda uv)^{\frac{1}{\lambda}-n-1}
    \dee v.
  \end{equation}
\end{theorem}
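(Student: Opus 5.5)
The plan is to apply Taylor's theorem with integral remainder to $f=\expdeg$ about $0$, and then substitute $t=uv$ to normalise the interval of integration. Note first that the derivatives of $\expdeg$ are explicit: by induction,
\[
  \expdeg^{(k)}(t)=\dfact{1}{k}\,(1+\lambda t)^{\frac1\lambda-k},
  \qquad t\in(-1/\lambda,\infty),
\]
since differentiating $(1+\lambda t)^{\frac1\lambda-k}$ produces the factor $\lambda(\frac1\lambda-k)=1-k\lambda$. In particular $\expdeg^{(j)}(0)=\dfact{1}{j}$. Hence the polynomial $\sum_{j=0}^n\dfact{1}{j}u^j/j!$ in the definition of $\Rdeg{n}$ is exactly the Taylor polynomial of $\expdeg$ of degree $n$ at $0$. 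This identification holds on the whole domain $(-1/\lambda,\infty)$, not only for $|\lambda u|<1$, so Lemma~\ref{lem:series-expansion} is not needed.

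Fix $u\in(-1/\lambda,\infty)$ with $u\ne0$. The segment between $0$ and $u$ lies in the domain, and $\expdeg$ is $C^\infty$ there. Taylor's formula with integral remainder therefore gives
\[
  \Rdeg{n}(u)=\frac{1}{n!}\int_0^u (u-t)^n\,\expdeg^{(n+1)}(t)\,\dee t
  =\frac{\dfact{1}{n+1}}{n!}\int_0^u (u-t)^n(1+\lambda t)^{\frac1\lambda-n-1}\dee t.
\]
Substituting $t=uv$ with $v\in[0,1]$ yields $(u-t)^n\,\dee t=u^{n+1}(1-v)^n\,\dee v$, and this works for either sign of $u$ because the orientation is handled automatically. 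The result is
\[
  \Rdeg{n}(u)=\frac{\dfact{1}{n+1}}{n!}\,u^{n+1}\int_0^1(1-v)^n(1+\lambda uv)^{\frac1\lambda-n-1}\dee v.
\]
For $\lambda\notin\Lambda_n$ we have $\dfact{1}{n+1}\neq0$. Multiplying by $(n+1)!/(\dfact{1}{n+1}u^{n+1})$ as in Definition~\ref{def:degenerate-remainder} then gives \eqref{eq:int-rep-formula} for $u\ne0$.

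It remains to treat $u=0$. There the right-hand side equals $(n+1)\int_0^1(1-v)^n\,\dee v=1$, which matches the definition $\Tdeg{n}(0)=1$.

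The only delicate point is justifying the integrand for $u<0$ or large $u$, where the exponent $\frac1\lambda-n-1$ may be negative. Lemma~\ref{lem:domination}, applied on the interval between $0$ and $u$, shows that $1+\lambda uv$ stays in a compact subset of $(0,\infty)$. The integrand is therefore bounded and continuous, so both the Taylor remainder formula and the change of variables are legitimate. I expect no real obstacle beyond this bookkeeping.
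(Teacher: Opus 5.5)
Your proposal is correct and follows essentially the same route as the paper's proof. Both use the derivative formula $\expdeg^{(k)}(t)=\dfact{1}{k}(1+\lambda t)^{1/\lambda-k}$ (by induction), Taylor's formula with integral remainder, the substitution $t=uv$ for either sign of $u$, cancellation of $\dfact{1}{n+1}\neq0$, and the check at $u=0$ via $(n+1)\int_0^1(1-v)^n\,\dee v=1$. The only cosmetic difference is that you evaluate the right-hand side directly at $u=0$, whereas the paper states that step as a limit $u\to0$; your direct evaluation is the slightly cleaner way to get the identity at the origin.
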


\begin{proof}
  Let $f = \expdeg$. Using the chain rule and induction on $k$, one
  verifies that
   \begin{equation}\label{eq:kth-deriv}
    f^{(k)}(u)=\dfact{1}{k}\,(1+\lambda u)^{1/\lambda-k},
    \quad k\in\No.
  \end{equation}

  Since $1+\lambda u>0$ on $(-1/\lambda,\infty)$, the derivative
  $f^{(n+1)}(u)=\dfact{1}{n+1}(1+\lambda u)^{1/\lambda-n-1}$ is
  continuous there, so $f\in C^{n+1}(-1/\lambda,\infty)$. For
  any fixed $u\in(-1/\lambda,\infty)$, the integrand
  $(u-t)^n f^{(n+1)}(t)$ is continuous on the closed interval
  with endpoints $0$ and $u$. Thus, the integral is a proper Riemann
  integral. Taylor's theorem with integral remainder gives
  \begin{equation}\label{eq:taylor-remainder}
    \Rdeg{n}(u)
    =\frac{1}{n!}\int_0^u(u-t)^n f^{(n+1)}(t)\dee t
    =\frac{\dfact{1}{n+1}}{n!}
    \int_0^u(u-t)^n(1+\lambda t)^{1/\lambda-n-1}\dee t.
  \end{equation}

  For $\lambda\notin\Lambda_n$ (so $\dfact{1}{n+1}\ne0$) and
  $u\ne0$, cancelling $\dfact{1}{n+1}$ yields
  \[
    \Tdeg{n}(u)
    =\frac{n+1}{u^{n+1}}
    \int_0^u(u-t)^n(1+\lambda t)^{1/\lambda-n-1}\dee t.
  \]
  We apply the substitution $t=uv$, $v\in[0,1]$. Let
  $h(t)=(u-t)^n(1+\lambda t)^{1/\lambda-n-1}$, which is
  continuous on the closed interval with endpoints $0$ and $u$.
  The general substitution formula
  $\int_0^u h(t)\,\dee t=\int_0^1 h(uv)\,u\,\dee v$, valid for any
  $u\ne0$ (when $u<0$ both sides are negative for $h>0$, since
  the left side integrates in the negative direction and the right
  side carries the factor $u<0$), applies directly. Since
  $u-t=u(1-v)$,
  \begin{align*}
    \int_0^u(u-t)^n(1+\lambda t)^{1/\lambda-n-1}\dee t
    &= \int_0^1 u^n(1-v)^n(1+\lambda uv)^{1/\lambda-n-1}\cdot u\,\dee v\\
    &= u^{n+1}\int_0^1(1-v)^n(1+\lambda uv)^{1/\lambda-n-1}\dee v.
  \end{align*}
  Substituting back and cancelling $u^{n+1}\ne0$ yield
  \[
    \Tdeg{n}(u)
    =(n+1)\int_0^1(1-v)^n(1+\lambda uv)^{1/\lambda-n-1}\dee v.
  \]
  For all $u\in(-1/\lambda,\infty)$ and $v\in[0,1]$,
  $1+\lambda uv>0$, so the right-hand side
  of~\eqref{eq:int-rep-formula} is well-defined and continuous in
  $\lambda>0$, including at $\lambda\in\Lambda_n$, where it serves
  as the continuation described in
  Remark~\ref{rem:analytic-continuation}.

  For $u=0$, using the Beta identity $\int_0^1(1-v)^n\dee v
  =B(1,n+1)=\frac{1}{n+1}$, we have
  \[
    \lim_{u\to0}\Tdeg{n}(u)
    =(n+1)\int_0^1(1-v)^n\dee v
    =(n+1)\cdot\frac{1}{n+1}=1=\Tdeg{n}(0).\qedhere
  \]
\end{proof}
\begin{remark}\label{rem:primary-defn}
  Formula~\eqref{eq:int-rep-formula} may be taken as
  the primary definition of $\Tdeg{n}(u)$ for all $\lambda>0$, as it
  also works for $\lambda\in\Lambda_n$ unlike
  Definition~\ref{def:degenerate-remainder}.
\end{remark}
\begin{corollary}[Positivity]\label{cor:positivity}
  For all $\lambda>0$, $n\in\No$, and $u\in(-1/\lambda,\infty)$,
  $\Tdeg{n}(u)>0$.
\end{corollary}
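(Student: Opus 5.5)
The plan is to read positivity directly off the integral representation~\eqref{eq:int-rep-formula}. By Remark~\ref{rem:primary-defn}, this formula serves as the definition of $\Tdeg{n}(u)$ for every $\lambda>0$, including $\lambda\in\Lambda_n$. So a single argument covers all parameters and no case split on $\Lambda_n$ is needed.

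Fix $\lambda>0$, $n\in\No$ and $u\in(-1/\lambda,\infty)$. By Lemma~\ref{lem:domination}, applied with $a=b=u$, we have $1+\lambda uv\ge m>0$ for all $v\in[0,1]$. Hence, for any real exponent $\beta=\tfrac1\lambda-n-1$ (positive, zero, or negative), the factor $(1+\lambda uv)^{\beta}$ is well-defined, continuous in $v$, and strictly positive on $[0,1]$. The factor $(1-v)^n$ is continuous, nonnegative on $[0,1]$, and strictly positive on $[0,1)$. When $n=0$ we use the convention $(1-v)^0=1$.

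The integrand is therefore a continuous nonnegative function on $[0,1]$ that is strictly positive on the nondegenerate interval $[0,1)$. Its Riemann integral is then strictly positive; concretely, it is bounded below by $K_\beta^{-1}$-type constants. For an explicit bound, use $(1+\lambda uv)^\beta\ge\min(m^\beta,M^\beta)>0$ with $m,M$ from Lemma~\ref{lem:domination}. This gives
\[
\Tdeg{n}(u)\ge (n+1)\min(m^\beta,M^\beta)\int_0^1(1-v)^n\,\dee v=\min(m^\beta,M^\beta)>0 .
\]
At $u=0$ this is consistent with $\Tdeg{n}(0)=1$.

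There is no real obstacle here. The only point requiring care is that the algebraic Definition~\ref{def:degenerate-remainder} has the sign-indefinite prefactor $(n+1)!/\dfact{1}{n+1}$, which can be negative when $\lambda>1/(n+1)$. One must therefore argue through the integral form rather than through the sign of $\Rdeg{n}(u)/u^{n+1}$. This is exactly why Theorem~\ref{thm:integral-rep} is invoked: it establishes that the two definitions agree for $\lambda\notin\Lambda_n$.
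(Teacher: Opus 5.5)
Your proposal is correct and is essentially the paper's proof. Both arguments read positivity directly off the integral representation~\eqref{eq:int-rep-formula}, using that $(1+\lambda uv)^{1/\lambda-n-1}>0$ and $(1-v)^n\ge0$, with strict positivity away from $v=1$; your explicit lower bound $\min(m^\beta,M^\beta)$ is a harmless extra and coincides with the lower bounds of Theorem~\ref{thm:bounds}. One small slip in your closing remark, which does not affect the proof: the prefactor $(n+1)!/\dfact{1}{n+1}$ changes sign only at the points of $\Lambda_n$, so for $n\ge1$ it is positive on all of $(0,1/n)$, including $(1/(n+1),1/n)$, and can be negative only for certain $\lambda>1/n$.
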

\begin{proof}
  The integrand $(1-v)^n(1+\lambda uv)^{1/\lambda-n-1}$ is
  non-negative on $[0,1]$ and strictly positive for $v\in(0,1)$.
  Since it is strictly positive on a set of positive measure, the
  integral is strictly positive.
\end{proof}

%% ==================================================================
\section{Monotonicity and Its Threshold}\label{sec:monotonicity}
%% ==================================================================
Throughout this paper, for a given $\lambda>0$
and $n\in\No$, we set
\[
  \alpha\;=\;\alpha(\lambda,n)\;:=\;\frac{1}{\lambda}-n-1.
\]
In the bounds (Section~\ref{sec:bounds}) this quantity is written
$\alpha_n$ to emphasise its dependence on $n$; elsewhere we simply
write $\alpha$. The two notations refer to the same quantity.

\begin{theorem}[Monotonicity threshold]
  \label{thm:monotonicity-threshold}
  For $u\in(-1/\lambda,\infty)$,
  \[
    \frac{\dee}{\dee u}\Tdeg{n}(u)
    =(n+1)\bigl(1-(n+1)\lambda\bigr)
    \int_0^1 v(1-v)^n(1+\lambda uv)^{\alpha-1}\dee v.
  \]
  The sign of $\frac{\dee}{\dee u}\Tdeg{n}(u)$ is therefore
  determined by the sign of $1-(n+1)\lambda$, giving the following
  threshold classification.
  \begin{enumerate}
  \item If $\lambda<\frac{1}{n+1}$, then $\Tdeg{n}$ is strictly
    increasing.
  \item If $\lambda=\frac{1}{n+1}$, then $\Tdeg{n}(u)\equiv1$.
  \item If $\lambda>\frac{1}{n+1}$, then $\Tdeg{n}$ is strictly
    decreasing.
  \end{enumerate}
\end{theorem}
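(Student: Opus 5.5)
Starting from the integral representation of Theorem~\ref{thm:integral-rep}, taken as the definition for all $\lambda>0$ as in Remark~\ref{rem:primary-defn}, I will write
\[
\Tdeg{n}(u)=(n+1)\int_0^1(1-v)^n(1+\lambda uv)^{\alpha}\,\dee v,
\qquad \alpha=\tfrac1\lambda-n-1 .
\]
The first step is to differentiate under the integral sign. Fix a compact interval $[a,b]\subset(-1/\lambda,\infty)$ containing $u$ in its interior. The partial derivative of the integrand with respect to $u$ is
\[
(1-v)^n\,\alpha\lambda v\,(1+\lambda uv)^{\alpha-1}.
\]
By Lemma~\ref{lem:domination} with $\beta=\alpha-1$, this is bounded in absolute value by $|\alpha|\lambda K_{\alpha-1}$, uniformly for $u\in[a,b]$ and $v\in[0,1]$. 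The integrand and this partial derivative are continuous on $[a,b]\times[0,1]$, so the standard Leibniz rule (or dominated convergence) justifies the interchange. Since $\alpha\lambda=1-(n+1)\lambda$, this gives exactly the displayed formula
\[
\frac{\dee}{\dee u}\Tdeg{n}(u)=(n+1)\bigl(1-(n+1)\lambda\bigr)\int_0^1 v(1-v)^n(1+\lambda uv)^{\alpha-1}\dee v .
\]

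The second step is a sign argument. The remaining integral is strictly positive for every $u\in(-1/\lambda,\infty)$, by the same reasoning as in Corollary~\ref{cor:positivity}: the integrand is continuous, nonnegative, and strictly positive on $(0,1)$, since $1+\lambda uv>0$ there. Hence the derivative has the sign of $1-(n+1)\lambda$ at every point of the interval. Cases 1 and 3 then follow from the mean value theorem on the connected domain $(-1/\lambda,\infty)$, since a strictly positive (resp.\ negative) derivative gives strict increase (resp.\ decrease).

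For case 2, when $\lambda=1/(n+1)$ we have $\alpha=0$. The integrand then reduces to $(1-v)^n$, so $\Tdeg{n}(u)=(n+1)\cdot\frac1{n+1}=1$ directly, which is consistent with the derivative vanishing identically.

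The only point requiring care is the justification of differentiation under the integral sign, including when $u<0$ and $\alpha-1<0$. Lemma~\ref{lem:domination} was designed precisely to handle this, so I do not expect a genuine obstacle. I will also note that the argument never uses $\lambda\notin\Lambda_n$, so the result holds for all $\lambda>0$ through the continuation of Remark~\ref{rem:analytic-continuation}.
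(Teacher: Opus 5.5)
Your proposal is correct and follows the paper's proof essentially step for step. Both use Leibniz's rule justified by Lemma~\ref{lem:domination} with $\beta=\alpha-1$, the identity $\alpha\lambda=1-(n+1)\lambda$, strict positivity of the remaining integral to fix the sign, and the Beta identity when $\alpha=0$. Your explicit appeal to the mean value theorem, and your remark that $\lambda\notin\Lambda_n$ is never used, are small clarifications that the paper leaves implicit.
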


\begin{proof}
  Fix any compact $[a,b]\subset(-1/\lambda,\infty)$, and set
  $\phi(u,v)=(1-v)^n(1+\lambda uv)^{\alpha}$, whose partial
  derivative with respect to $u$ is
  $\partial_u\phi=\alpha\lambda v(1-v)^n(1+\lambda uv)^{\alpha-1}$.
  By Lemma~\ref{lem:domination} with $\beta=\alpha-1$, there exists
  a finite $K=K_{\alpha-1}$ such that
  $|(1+\lambda uv)^{\alpha-1}|\le K$ uniformly on $[a,b]\times[0,1]$.
  Hence $|\partial_u\phi|\le|\alpha\lambda|Kv(1-v)^n\in L^1([0,1])$,
  uniformly in $u$. The conditions of Leibniz's rule are satisfied, so
  \[
    \frac{\dee}{\dee u}\Tdeg{n}(u)
    =(n+1)\alpha\lambda
    \int_0^1 v(1-v)^n(1+\lambda uv)^{\alpha-1}\dee v,
  \]
  and $\alpha\lambda=1-(n+1)\lambda$ gives the stated formula.

  \textbf{Cases~1 and~3.}
  For any fixed $u\in(-1/\lambda,\infty)$ and $v\in(0,1)$, every
  factor in the integrand $v(1-v)^n(1+\lambda uv)^{\alpha-1}$ is
  strictly positive (since $1+\lambda uv>0$ on the domain), so the
  integral is strictly positive for every such $u$. The sign of
  the derivative is therefore determined solely by the $u$-independent
  prefactor $1-(n+1)\lambda$: positive when $\lambda<1/(n+1)$,
  negative when $\lambda>1/(n+1)$.

  \textbf{Case~2.}
  When $\lambda=1/(n+1)$, $\alpha=0$, and by the Beta
  identity $\int_0^1(1-v)^n\dee v=\frac{1}{n+1}$,
  \[
    \Tdeg{n}(u)=(n+1)\int_0^1(1-v)^n(1+\lambda uv)^{0}\dee v
    =(n+1)\cdot\frac{1}{n+1}=1.\qedhere
  \]
\end{proof}

%% ==================================================================
\section{Breakdown of Logarithmic Convexity}
\label{sec:log-convexity}
%% ==================================================================

Throughout this section, write $f=\Tdeg{n}$ for brevity, and define
the \emph{logarithmic second derivative}
\[
  L(u)=\frac{\dee^2}{\dee u^2}\ln f(u)
  =\frac{f''(u)f(u)-[f'(u)]^2}{[f(u)]^2}.
\]
This is well-defined since $f(u)>0$ everywhere
by Corollary~\ref{cor:positivity}. We call $f$ \emph{globally
logarithmically convex} (\emph{concave}) on an interval $I$
if $L(u)\ge0$ ($L(u)\le0$) for all $u\in I$, and
\emph{locally logarithmically convex at $u_0$} if $L(u_0)\ge0$.
This convention is fixed for the rest of the paper.

\begin{theorem}[Phase transitions in logarithmic convexity]
  \label{thm:log-convex-failure}
  Let $f(u)=\Tdeg{n}(u)$ and $L(u)$ be as above.
  \begin{enumerate}
  \item (\emph{Global concavity.}) If
    $\dfrac{1}{n+2}\le\lambda<\dfrac{1}{n+1}$, then $L(u)<0$
    for all $u>0$, so $f$ is globally logarithmically strictly
    concave on $(0,\infty)$.
  \item (\emph{Logarithmic affinity.}) If $\lambda=\dfrac{1}{n+1}$,
    then $f\equiv1$, so $L\equiv0$.
  \item (\emph{Local failure of convexity at the origin.}) If
    $\dfrac{n+1}{n^2+4n+5}<\lambda<\dfrac{1}{n+2}$, then $L(0)<0$.
  \end{enumerate}
\end{theorem}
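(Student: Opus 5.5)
For Cases~1 and~3 I would work from the integral representation of Theorem~\ref{thm:integral-rep}, written as $f(u)=(n+1)\int_0^1(1-v)^n(1+\lambda uv)^{\alpha}\,\dee v$ with $\alpha=1/\lambda-n-1$. Case~2 is immediate: Theorem~\ref{thm:monotonicity-threshold} gives $f\equiv1$, so $\ln f\equiv0$ and $L\equiv0$. The remaining work is to differentiate twice under the integral sign. I would do this exactly as in the proof of Theorem~\ref{thm:monotonicity-threshold}, now using Lemma~\ref{lem:domination} with $\beta=\alpha-2$ as well as $\beta=\alpha-1$. This gives
\[
  f''(u)=(n+1)\alpha(\alpha-1)\lambda^2\int_0^1 v^2(1-v)^n(1+\lambda uv)^{\alpha-2}\,\dee v,
\]
together with the formula for $f'$ already stated in Theorem~\ref{thm:monotonicity-threshold}.

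\textbf{Case~1.} The hypothesis $\frac{1}{n+2}\le\lambda<\frac{1}{n+1}$ is equivalent to $0<\alpha\le1$. I would split this into two subcases.
\begin{itemize}
\item If $0<\alpha<1$, then $\alpha(\alpha-1)<0$. The integrand in $f''$ is strictly positive on $(0,1)$, so $f''(u)<0$.
\item If $\alpha=1$, then $f''\equiv0$. Moreover $f'(u)=(n+1)\lambda\int_0^1 v(1-v)^n\,\dee v>0$.
\end{itemize}
In both subcases $f>0$ by Corollary~\ref{cor:positivity}, and $f'>0$ by Theorem~\ref{thm:monotonicity-threshold}. Hence
\[
  L=\frac{f''f-(f')^2}{f^2}\le-\frac{(f')^2}{f^2}<0 .
\]
This in fact holds on the whole domain $(-1/\lambda,\infty)$, not just on $(0,\infty)$. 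So the argument is essentially that a positive concave function is log-concave, strictly so because $f'\neq0$.

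\textbf{Case~3.} Since $f(0)=1$, we have $L(0)=f''(0)-f'(0)^2$. I would evaluate both terms with the Beta integrals
\[
  \int_0^1 v(1-v)^n\,\dee v=\frac{1}{(n+1)(n+2)},\qquad
  \int_0^1 v^2(1-v)^n\,\dee v=\frac{2}{(n+1)(n+2)(n+3)}.
\]
These give $f'(0)=\alpha\lambda/(n+2)$ and $f''(0)=2\alpha(\alpha-1)\lambda^2/((n+2)(n+3))$. Factoring,
\[
  L(0)=\frac{\alpha\lambda^2}{(n+2)^2(n+3)}\Bigl[(n+1)\alpha-2(n+2)\Bigr].
\]
The condition $\lambda<1/(n+2)$ gives $\alpha>1>0$, so the sign of $L(0)$ is the sign of the bracket. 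The bracket is negative iff $\alpha<2(n+2)/(n+1)$. That is equivalent to
\[
  \frac{1}{\lambda}<n+1+\frac{2(n+2)}{n+1}=\frac{n^2+4n+5}{n+1},
\]
i.e. $\lambda>\frac{n+1}{n^2+4n+5}$, which is exactly the stated lower bound.

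The main obstacle is only bookkeeping. One must justify the second differentiation under the integral, which Lemma~\ref{lem:domination} handles. One must also carry out the algebra in Case~3 carefully to recover the threshold $(n+1)/(n^2+4n+5)$ exactly. In Case~1, the only point needing attention is the endpoint $\alpha=1$, where strictness comes from $f'>0$ rather than from $f''<0$.
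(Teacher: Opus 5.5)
Your proposal is correct and matches the paper's proof step for step: you differentiate twice under the integral via Lemma~\ref{lem:domination}, split Case~1 at $\lambda=1/(n+2)$ (your $\alpha=1$) with strictness there coming from $f'>0$, and evaluate $L(0)=f''(0)-f'(0)^2$ with Beta integrals. Parametrising by $\alpha$ instead of the paper's $P=1-(n+1)\lambda$ and $Q=1-(n+2)\lambda$ is purely cosmetic, since $\alpha\lambda^2\bigl[(n+1)\alpha-2(n+2)\bigr]=P\bigl[(n+1)-(n^2+4n+5)\lambda\bigr]$.
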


\begin{proof}
  Taking the derivative of the integrand in $f'(u)$ with respect to
  $u$ gives
  \[
    \frac{\partial}{\partial u}
    \bigl[v(1-v)^n(1+\lambda uv)^{\alpha-1}\bigr]
    =(\alpha-1)\lambda v^2(1-v)^n(1+\lambda uv)^{\alpha-2}.
  \]
  By Lemma~\ref{lem:domination} with $\beta=\alpha-2$, the same
  bounds on any compact $[a,b]\subset(-1/\lambda,\infty)$ give
  $|(1+\lambda uv)^{\alpha-2}|\le K_{\alpha-2}<\infty$. Hence
  Leibniz's rule applies for $f''$ as well. Using
  $\alpha\lambda=1-(n+1)\lambda$ and
  $(\alpha-1)\lambda=1-(n+2)\lambda$, we have
  \begin{align}
    f'(u)  &= (n+1)\bigl(1-(n+1)\lambda\bigr)
              \int_0^1 v(1-v)^n(1+\lambda uv)^{\alpha-1}\dee v,
              \label{eq:f1}\\[4pt]
    f''(u) &= (n+1)\bigl(1-(n+1)\lambda\bigr)\bigl(1-(n+2)\lambda\bigr)
              \int_0^1 v^2(1-v)^n(1+\lambda uv)^{\alpha-2}\dee v.
              \label{eq:f2}
  \end{align}
  We now treat each case.
  \begin{enumerate}
  \item Suppose $\dfrac{1}{n+2}\le\lambda<\dfrac{1}{n+1}$. We consider
  two subcases.

  \emph{Subcase $\lambda>\frac{1}{n+2}$}: $1-(n+1)\lambda>0$ and
  $1-(n+2)\lambda<0$. Since the integral in~\eqref{eq:f2} is strictly
  positive for all $u\in(-1/\lambda,\infty)$, we have $f''(u)<0$
  throughout the domain. Since $f>0$, $L(u)=(f''f-[f']^2)/f^2<0$
  for all $u>0$.

  \emph{Subcase $\lambda=\frac{1}{n+2}$}: Here
  $\alpha=1/\lambda-n-1=(n+2)-n-1=1$ and
  $1-(n+2)\lambda=0$, so \eqref{eq:f2} gives $f''(u)=0$.
  Therefore $L(u)=-[f'(u)/f(u)]^2$. From~\eqref{eq:f1},
  $1-(n+1)\lambda=1/(n+2)>0$, and since the integrand
  $v(1-v)^n(1+\lambda uv)^{0}$ is strictly positive for
  $v\in(0,1)$, we obtain $f'(u)>0$ for all $u>0$, giving
  $L(u)<0$.

  In both subcases $L(u)<0$ for all $u>0$.

   \item If $\lambda=\frac{1}{n+1}$,
  then by Theorem~\ref{thm:monotonicity-threshold}
  $f\equiv1$, so $L\equiv0$.

  \item  Assume $\dfrac{n+1}{n^2+4n+5}<\lambda<\dfrac{1}{n+2}$.
  The interval is non-empty as the inequality
  $\frac{n+1}{n^2+4n+5}<\frac{1}{n+2}$ is equivalent to
  $(n+1)(n+2)<n^2+4n+5$, i.e.\ $n^2+3n+2<n^2+4n+5$, i.e.\
  $0<n+3$, which holds for all $n\in\No$.

  Since $f(0)=1$, the definition of $L$ gives
  $L(0)=[f''(0)\cdot f(0)-(f'(0))^2]/[f(0)]^2=f''(0)-[f'(0)]^2$.
  Setting $u=0$ in~\eqref{eq:f1} and~\eqref{eq:f2} and using
  $\int_0^1 v^k(1-v)^n\dee v=\frac{k!\,n!}{(n+k+1)!}$:
  \begin{align*}
    f'(0)&=\frac{1-(n+1)\lambda}{n+2},\\
    f''(0)&=\frac{2\bigl(1-(n+1)\lambda\bigr)\bigl(1-(n+2)\lambda\bigr)}{(n+2)(n+3)}.
  \end{align*}
  Setting $P:=1-(n+1)\lambda>0$ and $Q:=1-(n+2)\lambda>0$ yields
  \begin{align*}
    L(0)
    &=\frac{2PQ}{(n+2)(n+3)}-\frac{P^2}{(n+2)^2}
    =\frac{P}{n+2}\cdot\frac{2(n+2)Q-(n+3)P}{(n+2)(n+3)}.
  \end{align*}
  Expanding $N:=2(n+2)Q-(n+3)P$ with $P=1-(n+1)\lambda$,
  $Q=1-(n+2)\lambda$ results in $N=(n+1)-(n^2+4n+5)\lambda$. Hence,
  \[
    L(0)=\frac{P\cdot[(n+1)-(n^2+4n+5)\lambda]}{(n+2)^2(n+3)}.
  \]
  Since $P>0$, we have $L(0)<0$ if and only if
  $\lambda>\frac{n+1}{n^2+4n+5}$, which holds throughout the
  interval $\left(\frac{n+1}{n^2+4n+5},\frac{1}{n+2}\right)$.
  Hence $f$ fails to be globally logarithmically convex on any
  interval containing~$0$.
  \end{enumerate}
\end{proof}

Theorem~\ref{thm:log-convex-failure} establishes a local obstruction
(at $u=0$) to global log-convexity for $\lambda$ in the sub-interval
$\bigl(\frac{n+1}{n^2+4n+5},\frac{1}{n+2}\bigr)$ of the increasing
regime, and a global obstruction for the larger sub-interval
$\bigl[\frac{1}{n+2},\frac{1}{n+1}\bigr)$. The remaining gap,
$\lambda\in\bigl(0,\frac{n+1}{n^2+4n+5}\bigr]$, where $L(0)\ge0$,
is closed by Theorem~\ref{thm:asymptotic-log-concave} via an asymptotic
argument: $L(u)$ must eventually become negative as $u\to\infty$.
Together, Theorems~\ref{thm:log-convex-failure}
and~\ref{thm:asymptotic-log-concave} cover the entire increasing regime
$(0,1/(n+1))$ without overlap (Corollary~\ref{cor:unified-log-concave}).
The following lemma provides the key asymptotic tool.

\begin{lemma}[Asymptotic factorisation]\label{lem:asymptotic-factor}
  Let $\lambda\in(0,1/(n+1))$, so that $\alpha=1/\lambda-n-1>0$.
  Define $I_j(u):=\int_0^1 v^{j}(1-v)^n(1+\lambda uv)^{\alpha-j}\dee v$
  for $j\in\{0,1,2\}$ and $G_n:=B(n+1,\alpha+1)>0$.
  Then for each $j$,
  \[
    I_j(u)=(1+\lambda u)^{\alpha-j}\bigl[G_n+o(1)\bigr]
    \qquad\text{as }u\to\infty.
  \]
\end{lemma}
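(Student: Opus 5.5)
The plan is to factor $(1+\lambda u)^{\alpha-j}$ out of the integrand and then apply the dominated convergence theorem. For $u>0$ and $v\in[0,1]$, set
\[
  r(u,v):=\frac{1+\lambda uv}{1+\lambda u}
  =\frac{1}{1+\lambda u}\cdot 1+\frac{\lambda u}{1+\lambda u}\cdot v .
\]
Then
\[
  I_j(u)=(1+\lambda u)^{\alpha-j}\int_0^1 v^j(1-v)^n\, r(u,v)^{\alpha-j}\,\dee v ,
\]
so it suffices to show that the last integral tends to $G_n$ as $u\to\infty$.

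\emph{Pointwise limit.} For fixed $v\in(0,1]$ we have $r(u,v)\to v$ as $u\to\infty$. Hence the integrand converges to
\[
  v^j(1-v)^n v^{\alpha-j}=v^{\alpha}(1-v)^n .
\]
Its integral over $[0,1]$ is $B(\alpha+1,n+1)=G_n$, which is finite and positive because $\alpha>0$. The limit is the same for every $j\in\{0,1,2\}$. The single point $v=0$ is a null set and is irrelevant.

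\emph{Domination.} This is the only point needing care, since $\alpha-j$ may have either sign. It helps that $r(u,v)$ is a convex combination of $1$ and $v$, so $v\le r(u,v)\le 1$ for all $u>0$ and $v\in[0,1]$.
\begin{itemize}
\item If $\alpha-j\ge 0$, then $r^{\alpha-j}\le 1$. The integrand is then bounded by $v^j(1-v)^n$, which is integrable.
\item If $\alpha-j<0$, then $s\mapsto s^{\alpha-j}$ is decreasing on $(0,\infty)$. This gives $r^{\alpha-j}\le v^{\alpha-j}$, so the integrand is bounded by $v^{\alpha}(1-v)^n$. This bound is integrable precisely because $\alpha>0$, i.e.\ because $\lambda<1/(n+1)$.
\end{itemize}
In both cases the bound is independent of $u>0$.

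Dominated convergence then gives $\int_0^1 v^j(1-v)^n r(u,v)^{\alpha-j}\,\dee v = G_n+o(1)$. Multiplying back by $(1+\lambda u)^{\alpha-j}$ yields the claimed asymptotic.

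I expect the main obstacle to be the domination step in the case $\alpha-j<0$, which occurs for $j=1,2$ when $\alpha$ is small. There the factor $r^{\alpha-j}$ blows up near $v=0$ in the limit, and the hypothesis $\alpha>0$ is exactly what keeps the dominating function $v^\alpha(1-v)^n$ integrable.
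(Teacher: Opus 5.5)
Your proof is correct and is essentially the paper's argument: your ratio $r(u,v)=(1+\lambda uv)/(1+\lambda u)$ is exactly the paper's $1-rw$ after the substitution $w=1-v$, your sandwich $v\le r(u,v)\le 1$ is the paper's inequality $1-rw\ge 1-w$, and both proofs conclude by dominated convergence. The only difference is the dominating function. The paper uses the uniform bound $w^n$, which relies on $(1-w)^\alpha\le 1$ and hence on $\alpha\ge 0$, whereas your bound $v^\alpha(1-v)^n$ is integrable already for $\alpha>-1$; so your remark that integrability holds ``precisely because $\alpha>0$'' overstates what your own argument actually needs.
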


\begin{proof}
Substitute $w=1-v$ to write
$I_j(u)=\int_0^1(1-w)^{j}w^n(1+\lambda u(1-w))^{\alpha-j}\dee w$.
Factor out $(1+\lambda u)^{\alpha-j}$ by writing
$1+\lambda u(1-w)=(1+\lambda u)(1-rw)$ where
$r:=\lambda u/(1+\lambda u)\in[0,1)$. This gives
$I_j(u)=(1+\lambda u)^{\alpha-j}h_j(r)$ with
$h_j(r):=\int_0^1(1-w)^{j}w^n(1-rw)^{\alpha-j}\dee w$.

\textit{Uniform domination.}
We show $(1-w)^j(1-rw)^{\alpha-j}\le1$ for all
$w\in[0,1]$ and $r\in[0,1)$, uniformly in $j\in\{0,1,2\}$.
Since $\alpha>0$:
\begin{itemize}
\item If $\alpha-j\ge0$: $(1-w)^j\le1$ and $(1-rw)^{\alpha-j}\le1$.
  (as both bases lie in $[0,1]$ and exponents are non-negative).
\item If $\alpha-j<0$: since $0\le r<1$ we have $rw\le w$, hence
  $1-rw\ge1-w\ge0$. The map $s\mapsto s^{\alpha-j}$ is decreasing
  on $(0,\infty)$, so $(1-rw)^{\alpha-j}\le(1-w)^{\alpha-j}$, giving
  $(1-w)^j(1-rw)^{\alpha-j}\le(1-w)^j(1-w)^{\alpha-j}=(1-w)^\alpha\le1$,
  where the last inequality uses $\alpha>0$ and $1-w\in[0,1]$.
\end{itemize}
In both cases $(1-w)^j(1-rw)^{\alpha-j}\le1$, so the integrand of
$h_j(r)$ is bounded above by $w^n$, which is integrable on $[0,1]$
and independent of both $r$ and $j$.

\textit{Pointwise convergence and conclusion.}
As $u\to\infty$, $r\to1^-$. For each fixed $w\in[0,1)$,
$(1-rw)^{\alpha-j}\to(1-w)^{\alpha-j}$, so
$(1-w)^jw^n(1-rw)^{\alpha-j}\to w^n(1-w)^\alpha$.
At $w=1$: for $j\ge1$ the factor $(1-w)^j=0$ so the integrand
is $0$ for all $r$; for $j=0$ the integrand equals $(1-r)^\alpha\to0$
since $\alpha>0$. Thus convergence holds on all of $[0,1]$.
By dominated convergence,
$h_j(r)\to\int_0^1 w^n(1-w)^\alpha\dee w=G_n$
for every $j\in\{0,1,2\}$, proving the lemma.
\end{proof}

\begin{theorem}[Asymptotic failure of global logarithmic convexity]
  \label{thm:asymptotic-log-concave}
  Let $f(u)=\Tdeg{n}(u)$ and fix any $\lambda\in(0,1/(n+1))$, so
  that $\alpha=1/\lambda-n-1>0$. Let $L(u)=\frac{\dee^2}{\dee u^2}\ln f(u)$
  denote the logarithmic second derivative of $f$ as in
  Section~\ref{sec:log-convexity}. Then
  \[
    \lim_{u\to\infty}u^{2}L(u)=-\alpha<0.
  \]
  In particular, $L(u)<0$ for all sufficiently large $u$, so $f$
  fails to be globally logarithmically convex on $(0,\infty)$ for
  every $\lambda\in(0,1/(n+1))$.
\end{theorem}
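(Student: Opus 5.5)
The plan is to express $L(u)$ entirely through the three integrals $I_0,I_1,I_2$ of Lemma~\ref{lem:asymptotic-factor}, and then read off the limit from that lemma.

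First, by the integral representation \eqref{eq:int-rep-formula} together with \eqref{eq:f1} and \eqref{eq:f2} (justified by Leibniz's rule via Lemma~\ref{lem:domination}), and using $\alpha\lambda=1-(n+1)\lambda$ and $(\alpha-1)\lambda=1-(n+2)\lambda$, we have
\[
  f=(n+1)I_0,\qquad f'=(n+1)\alpha\lambda\, I_1,\qquad f''=(n+1)\alpha(\alpha-1)\lambda^2 I_2 .
\]
Hence
\[
  L(u)=\frac{f''}{f}-\Bigl(\frac{f'}{f}\Bigr)^2
  =\lambda^2\Bigl[\alpha(\alpha-1)\frac{I_2}{I_0}-\alpha^2\Bigl(\frac{I_1}{I_0}\Bigr)^2\Bigr].
\]
Here $I_0>0$ by Corollary~\ref{cor:positivity}.

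Next we apply Lemma~\ref{lem:asymptotic-factor}, which is valid because $\lambda\in(0,1/(n+1))$ gives $\alpha>0$ and $G_n>0$. It yields
\[
  \frac{I_2}{I_0}=(1+\lambda u)^{-2}\,\frac{G_n+o(1)}{G_n+o(1)}=(1+\lambda u)^{-2}\bigl(1+o(1)\bigr),
\]
and likewise $I_1/I_0=(1+\lambda u)^{-1}(1+o(1))$, so $(I_1/I_0)^2=(1+\lambda u)^{-2}(1+o(1))$. Substituting these gives
\[
  u^2L(u)=\frac{\lambda^2u^2}{(1+\lambda u)^2}\Bigl[\alpha(\alpha-1)\bigl(1+o(1)\bigr)-\alpha^2\bigl(1+o(1)\bigr)\Bigr].
\]
Since $\lambda^2u^2/(1+\lambda u)^2\to1$, the bracket tends to $\alpha(\alpha-1)-\alpha^2=-\alpha$.

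The one point needing care is that $L$ is a difference of two terms of the same order $(1+\lambda u)^{-2}$ whose leading $\alpha^2$ parts cancel. I must check that the relative errors cannot swamp the surviving term. This is harmless: each $o(1)$ is multiplied by a fixed constant ($\alpha(\alpha-1)$ or $\alpha^2$), so the total error is $o(1)$. The residual $-\alpha$ is a nonzero constant, so no second-order expansion of $I_j$ is needed. Only the first-order relative asymptotics from Lemma~\ref{lem:asymptotic-factor} are used, with the same constant $G_n$ for all $j$; that common constant is exactly what makes the ratios tend to $1$.

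Finally, since the limit $-\alpha$ is negative, there is $u_0$ with $u^2L(u)<-\alpha/2$ for $u>u_0$. Thus $L(u)<0$ for all large $u$, and $f$ is not globally logarithmically convex on $(0,\infty)$.
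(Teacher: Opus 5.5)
Your proof is correct and follows essentially the same route as the paper: you write $f,f',f''$ in terms of $I_0,I_1,I_2$ and read off the limit from Lemma~\ref{lem:asymptotic-factor}. Working with the ratios $I_2/I_0$ and $(I_1/I_0)^2$ and tracking the $o(1)$ errors explicitly is slightly cleaner than the paper's subtraction of the asymptotic equivalences $f''f\sim\alpha(\alpha-1)A^2u^{2\alpha-2}$ and $[f']^2\sim\alpha^2A^2u^{2\alpha-2}$. Your version handles the cancellation of the $\alpha^2$ terms, and the case $\alpha=1$ where $f''\equiv0$, without further comment.
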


\begin{proof}
  \textbf{Step~1.} For $j\in\{0,1,2\}$, define
  \[
    I_j(u):=\int_0^1 v^{j}(1-v)^n(1+\lambda uv)^{\alpha-j}\dee v,
  \]
  so that
  \begin{align*}
  f(u)&=(n+1)I_0(u),\quad
  f'(u)=(n+1)(1-(n+1)\lambda)I_1(u),\\
  f''(u)&=(n+1)(1-(n+1)\lambda)(1-(n+2)\lambda)I_2(u)
  \end{align*}
  by Theorem~\ref{thm:integral-rep}, \eqref{eq:f1}, and~\eqref{eq:f2}.
  By Lemma~\ref{lem:asymptotic-factor}, for each $j\in\{0,1,2\}$,
  $I_j(u)=(1+\lambda u)^{\alpha-j}[G_n+o(1)]$ as $u\to\infty$,
  where $G_n=B(n+1,\alpha+1)>0$ is independent of $j$.

  \textbf{Step~2.}
  Since $(1+\lambda u)/(\lambda u)\to1$ as $u\to\infty$, we have
  $(1+\lambda u)^{\alpha-j}\sim(\lambda u)^{\alpha-j}$ for each
  fixed real exponent $\alpha-j$. Combined with the claim, this
  gives $I_j(u)\sim G_n(\lambda u)^{\alpha-j}$. Using
  $1-(n+1)\lambda=\alpha\lambda$ and
  $1-(n+2)\lambda=(\alpha-1)\lambda$:
  \begin{align*}
    f(u)   &\sim (n+1)G_n(\lambda u)^{\alpha} =: A\,u^{\alpha},\\
    f'(u)  &\sim (n+1)\alpha\lambda\,G_n(\lambda u)^{\alpha-1}
              = \alpha A\,u^{\alpha-1},\\
    f''(u) &\sim (n+1)\alpha(\alpha-1)\lambda^2\,G_n(\lambda u)^{\alpha-2}
              = \alpha(\alpha-1)A\,u^{\alpha-2},
  \end{align*}
  where $A:=(n+1)G_n\lambda^{\alpha}>0$.
  By the multiplicative property of asymptotic equivalence
  (if $g_i\sim c_i u^{p_i}$ then $g_1g_2\sim c_1c_2u^{p_1+p_2}$):
  $f''(u)f(u)\sim\alpha(\alpha-1)A^2u^{2\alpha-2}$,
  $[f'(u)]^2\sim\alpha^2 A^2u^{2\alpha-2}$, and
  $[f(u)]^2\sim A^2u^{2\alpha}$. Therefore,
  \[
    L(u)=\frac{f''f-[f']^2}{f^2}
    \sim\frac{[\alpha(\alpha-1)-\alpha^2]A^2\,u^{2\alpha-2}}
             {A^2\,u^{2\alpha}}
    =\frac{-\alpha}{u^{2}}\qquad(u\to\infty),
  \]
  so $\lim_{u\to\infty}u^2L(u)=-\alpha<0$, and $L(u)<0$ for all
  sufficiently large $u$.
\end{proof}

Consequently, the threshold $\frac{n+1}{n^2+4n+5}$ in
Theorem~\ref{thm:log-convex-failure} cannot be improved to a
statement of global logarithmic convexity at any positive $\lambda$
in the increasing regime.

\begin{remark}[Rate of convergence]
  The limit $u^2L(u)\to-\alpha$ in
  Theorem~\ref{thm:asymptotic-log-concave} is attained at a rate
  $u^2L(u)+\alpha=O(u^{-1})$ as $u\to\infty$, because $h_j(r)-G_n=O(1-r)=O(u^{-1})$ as $u\to\infty$
  (since $r=\lambda u/(1+\lambda u)=1-O(u^{-1})$).
  In particular, when $\alpha$ is small (i.e.\ $\lambda$ is near
  $1/(n+1)$) the convergence is slow, and numerically one needs
  $u\gg 1/\alpha$ to observe $u^2L(u)$ close to $-\alpha$;
  this does not affect the theoretical validity of the theorem.
\end{remark}

\begin{remark}
  Theorem~\ref{thm:asymptotic-log-concave} shows that even for
  $\lambda\le\frac{n+1}{n^2+4n+5}$, where $L(0)\ge0$ and $f$ is
  locally logarithmically convex at the origin (in the sense
  that $L(0)\ge0$), global logarithmic convexity on all of
  $(0,\infty)$ is impossible, because $L(u)$ must eventually turn
  strictly negative as $u\to\infty$.
\end{remark}

\begin{corollary}[Failure of global logarithmic convexity throughout
  the increasing regime]\label{cor:unified-log-concave}
  For every $\lambda\in(0,1/(n+1))$, $f=\Tdeg{n}$ fails to be
  globally logarithmically convex on $(0,\infty)$.
\end{corollary}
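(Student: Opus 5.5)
The corollary follows directly from Theorem~\ref{thm:asymptotic-log-concave}, which already covers every $\lambda\in(0,1/(n+1))$. Fix such a $\lambda$, so that $\alpha=1/\lambda-n-1>0$. By Theorem~\ref{thm:asymptotic-log-concave}, $u^2L(u)\to-\alpha<0$ as $u\to\infty$. Taking $\varepsilon=\alpha/2$ in the definition of the limit gives some $U>0$ with $u^2L(u)<-\alpha/2$ for all $u>U$. Hence $L(u)<-\alpha/(2u^2)<0$ for every $u>U$.

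In particular, $L(u_0)<0$ at some point $u_0\in(0,\infty)$, for instance $u_0=U+1$. By the convention fixed at the start of Section~\ref{sec:log-convexity}, global logarithmic convexity on $(0,\infty)$ requires $L(u)\ge0$ for all $u>0$. That condition fails at $u_0$, so $f$ is not globally logarithmically convex on $(0,\infty)$.

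It is also worth stating that the argument is organised as the paper frames it, since this shows consistency with the earlier results. For $\lambda\in[\frac{1}{n+2},\frac{1}{n+1})$, part~1 of Theorem~\ref{thm:log-convex-failure} already gives $L<0$ on all of $(0,\infty)$. For $\lambda\in(\frac{n+1}{n^2+4n+5},\frac{1}{n+2})$, part~3 gives $L(0)<0$. By continuity of $L$, which holds because $f$ is $C^2$ by \eqref{eq:f1}--\eqref{eq:f2} and $f>0$, this forces $L<0$ on some interval $(0,\delta)$. On the remaining range $(0,\frac{n+1}{n^2+4n+5}]$, only the asymptotic theorem applies. Since that theorem alone handles the whole interval, the case split is optional.

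There is no genuine obstacle here. The only point needing care is that the hypothesis $\alpha>0$ of Lemma~\ref{lem:asymptotic-factor} and Theorem~\ref{thm:asymptotic-log-concave} holds exactly on the stated range. This is immediate, because $\lambda<1/(n+1)$ is equivalent to $1/\lambda>n+1$.
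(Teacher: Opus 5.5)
Your proof is correct and rests on the same key result as the paper's proof, Theorem~\ref{thm:asymptotic-log-concave}. The paper splits $(0,\frac{1}{n+1})$ and treats $[\frac{1}{n+2},\frac{1}{n+1})$ by Case~1 of Theorem~\ref{thm:log-convex-failure}, whereas you correctly note that the asymptotic theorem is stated for the whole range, so no split is needed. The split buys the stronger conclusion $L<0$ on all of $(0,\infty)$ for the upper subinterval, and it avoids relying on the asymptotic manipulations at $\lambda=\frac{1}{n+2}$, where $f''\equiv0$ (though there $L=-(f'/f)^2<0$ directly). Neither is needed for this corollary, so your version loses nothing.
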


\begin{proof}
  The interval $(0,1/(n+1))$ splits as
  $(0,1/(n+2))\cup[1/(n+2),1/(n+1))$. For
  $\lambda\in[1/(n+2),1/(n+1))$, Theorem~\ref{thm:log-convex-failure}
  (Case~1) gives $L(u)<0$ for all $u>0$. For
  $\lambda\in(0,1/(n+2))$,
  Theorem~\ref{thm:asymptotic-log-concave} gives $L(u)<0$ for all
  sufficiently large $u$. In both cases $f$ is not globally
  logarithmically convex.
\end{proof}

\begin{remark}
  Corollary~\ref{cor:unified-log-concave} is sharp in scope. It is
  restricted to the increasing regime $\lambda\in(0,1/(n+1))$.
  At the single boundary point $\lambda=1/(n+1)$, $f\equiv1$
  identically, so $L\equiv0$ and $f$ is (trivially, weakly) globally
  logarithmically convex there. The decreasing regime
  $\lambda>1/(n+1)$ is left open.
\end{remark}

%% ==================================================================
\section{Absolute Monotonicity}\label{sec:abs-monotonicity}
%% ==================================================================

The notion of absolutely monotonic functions goes back to Widder~\cite{Widder1941}, whose work establishes its central role in the theory of Bernstein representations and Laplace transforms.

\begin{definition}\label{def:abs-monotone}
  A function $g\colon[0,\infty)\to\R$ is called \emph{absolutely
  monotonic} if $g^{(k)}(u)\ge0$ for every $u\ge0$ and every
  $k\in\No$.
\end{definition}

Before stating the main theorem, we introduce the standard
falling factorial. For $\alpha\in\R$ and $k\in\No$, set
\[
  (\alpha)_k=\alpha(\alpha-1)\cdots(\alpha-k+1),
  \qquad(\alpha)_0=1.
\]
Note the distinction from the degenerate falling factorial
$\dfact{x}{n}$ of Definition~\ref{def:factorials}: the latter
depends on the parameter $\lambda$, whereas $(\alpha)_k$ does not.

\begin{theorem}[Necessary and sufficient condition for absolute
  monotonicity]
  \label{thm:abs-monotonicity}
  Fix $\lambda>0$, $n\in\No$, and $\alpha=\tfrac{1}{\lambda}-n-1$.
  Then $\Tdeg{n}$ is absolutely monotonic on $[0,\infty)$ if and
  only if $\alpha\in\No$.
\end{theorem}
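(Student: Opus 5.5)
The plan is to differentiate the integral representation of Theorem~\ref{thm:integral-rep} $k$ times under the integral sign. This is justified on every compact $[a,b]\subset[0,\infty)$ exactly as in the proofs of Theorems~\ref{thm:monotonicity-threshold} and~\ref{thm:log-convex-failure}, using Lemma~\ref{lem:domination} with $\beta=\alpha-k$. The result is the closed formula
\[
  \frac{\dee^k}{\dee u^k}\Tdeg{n}(u)
  =(n+1)\,(\alpha)_k\,\lambda^k\int_0^1 v^k(1-v)^n(1+\lambda uv)^{\alpha-k}\dee v,
  \qquad k\in\No .
\]
An induction on $k$ gives this, since each differentiation brings down the factor $(\alpha-k)\lambda v$. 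For $u\ge0$ and $v\in(0,1)$ the integrand is strictly positive, so the integral is strictly positive. Hence the sign of the $k$th derivative on $[0,\infty)$ equals the sign of the $u$-independent constant $(\alpha)_k$. Absolute monotonicity is therefore equivalent to $(\alpha)_k\ge0$ for all $k\in\No$, and the whole theorem reduces to a statement about the falling factorials of a real number $\alpha>-n-1$.

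\emph{Sufficiency.} Let $\alpha=m\in\No$. Then $(m)_k=m(m-1)\cdots(m-k+1)$ is a product of positive integers for $k\le m$, and it vanishes for $k>m$ because it contains the factor $0$. So every derivative is nonnegative. Indeed $\Tdeg{n}$ is then a polynomial of degree $m$ with nonnegative coefficients, which is a useful check.

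\emph{Necessity.} Suppose $\alpha\notin\No$. Then $(\alpha)_k\ne0$ for every $k$, and I will exhibit a $k$ with $(\alpha)_k<0$. If $\alpha<0$, take $k=1$: $(\alpha)_1=\alpha<0$. This is consistent with Theorem~\ref{thm:monotonicity-threshold}, Case~3. If $\alpha>0$ is not an integer, let $m=\lfloor\alpha\rfloor\ge0$ and take $k=m+2$. The factors $\alpha,\alpha-1,\dots,\alpha-m$ are positive and $\alpha-m-1<0$, so $(\alpha)_{m+2}$ has exactly one negative factor and is negative. Then $\frac{\dee^{m+2}}{\dee u^{m+2}}\Tdeg{n}(u)<0$ for all $u\ge0$, in particular at $u=0$, where the derivative is an explicit negative Beta-integral multiple.

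The main obstacle is bookkeeping rather than depth. First, the differentiation formula must be justified for arbitrary $k$. Second, the endpoint $u=0$ must be included in the domain $[0,\infty)$; one-sided derivatives are fine here because the representation is valid on $(-1/\lambda,\infty)$. Third, the exceptional values $\lambda\in\Lambda_n$ need care. By Remark~\ref{rem:primary-defn} the integral formula serves as the definition at those values, so no separate argument is required.
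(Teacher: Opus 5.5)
Your proposal is correct and follows the paper's proof essentially step for step. It uses the same inductive $k$-th derivative formula justified by Lemma~\ref{lem:domination}, the same observation that the sign on $[0,\infty)$ is that of $(\alpha)_k$, and the same necessity split into $\alpha<0$ (take $k=1$) and non-integer $\alpha>0$ (take $k=\lfloor\alpha\rfloor+2$). Your extra remarks on the polynomial check for $\alpha\in\No$, one-sided derivatives at $u=0$, and the values $\lambda\in\Lambda_n$ are accurate supplementary bookkeeping.
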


\begin{proof}
  We prove by induction on $k$ that for every $k\in\No$,
  \begin{equation}\label{eq:kth-T}
    \Tdeg{n}^{(k)}(u)
    =(n+1)(\alpha)_k\lambda^k
    \int_0^1 v^k(1-v)^n(1+\lambda uv)^{\alpha-k}\dee v.
  \end{equation}

  \emph{Base case ($k=0$).} Formula~\eqref{eq:kth-T} reduces to
  the integral representation of Theorem~\ref{thm:integral-rep}.

  \emph{Inductive step.} Assume \eqref{eq:kth-T} holds for some
  $k\ge0$. By Lemma~\ref{lem:domination} applied with exponent
  $\alpha-k-1$, Leibniz's rule gives
  \[
    \Tdeg{n}^{(k+1)}(u)
    =(n+1)(\alpha)_{k+1}\lambda^{k+1}
    \int_0^1 v^{k+1}(1-v)^n(1+\lambda uv)^{\alpha-k-1}\dee v,
  \]
  which is~\eqref{eq:kth-T} at $k+1$, completing the induction.

  For $u\ge0$, the integrand $v^k(1-v)^n(1+\lambda uv)^{\alpha-k}$
  is strictly positive for $v\in(0,1)$, so the integral is strictly
  positive. Since $\lambda>0$ and $(n+1)(\alpha)_k\lambda^k$
  is a real constant (depending only on the parameters, not on $u$),
  the sign of $\Tdeg{n}^{(k)}(u)$ for $u\ge0$ is exactly the sign
  of $(\alpha)_k$: positive if $(\alpha)_k>0$, zero if
  $(\alpha)_k=0$, and negative if $(\alpha)_k<0$. No cancellation
  with the integral term is possible since the integral is always
  strictly positive.

  \smallskip\noindent\textbf{Sufficiency ($\alpha\in\No$).}
  Let $\alpha=m\in\No$.
  \begin{itemize}
  \item $k=0$: $(\alpha)_0=1>0$.
  \item $1\le k\le m$: each factor $m-j\ge1>0$ for $j=0,\dotsc,k-1$,
    so $(\alpha)_k\ge1>0$.
  \item $k>m$: $(\alpha)_k$ contains the factor $m-m=0$
    (at $j=m$), giving $(\alpha)_k=0$.
  \end{itemize}
  Hence $\Tdeg{n}^{(k)}(u)\ge0$ for all $k\in\No$, $u\ge0$.

  \smallskip\noindent\textbf{Necessity ($\alpha\notin\No$).}
  Since $\alpha\notin\No$, no factor $\alpha-k$ ($k\ge0$) vanishes.

  \emph{Case~(a): $\alpha<0$.} $(\alpha)_1=\alpha<0$, so
  $\Tdeg{n}'(u)<0$: not absolutely monotonic.

  \emph{Case~(b): $\alpha>0$, $\alpha\notin\No$.}
  Let $m=\lfloor\alpha\rfloor$, so $m<\alpha<m+1$.
  Then $\alpha-m>0$ and $\alpha-m-1<0$; every factor in
  $(\alpha)_{m+1}=\prod_{j=0}^{m}(\alpha-j)$ is positive, so
  $(\alpha)_{m+1}>0$; but $(\alpha)_{m+2}=(\alpha-m-1)(\alpha)_{m+1}<0$.
  Hence $\Tdeg{n}^{(m+1)}(u)>0$ but $\Tdeg{n}^{(m+2)}(u)<0$.

  In both cases the failure of absolute monotonicity is confirmed.
\end{proof}

\begin{corollary}[Failure of absolute monotonicity for almost every
  $\lambda$]\label{cor:ae-failure}
  For fixed $n\in\No$, the set of $\lambda>0$ for which $\Tdeg{n}$
  is absolutely monotonic on $[0,\infty)$ is exactly
  $\bigl\{\tfrac{1}{n+1+m}:m\in\No\bigr\}$, a countably infinite
  set with Lebesgue measure zero. Consequently, $\Tdeg{n}$ fails to
  be absolutely monotonic for Lebesgue-almost every $\lambda>0$.
\end{corollary}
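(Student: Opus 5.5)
This follows directly from Theorem~\ref{thm:abs-monotonicity}: $\Tdeg{n}$ is absolutely monotonic on $[0,\infty)$ exactly when $\alpha=\frac{1}{\lambda}-n-1\in\No$. Because that theorem is proved from the integral representation~\eqref{eq:int-rep-formula}, it holds for every $\lambda>0$ (Remark~\ref{rem:primary-defn}), including $\lambda\in\Lambda_n$. So what remains is to describe the parameter set explicitly and then check cardinality and measure.

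\textbf{Identifying the set.} The map $\lambda\mapsto \frac1\lambda-n-1$ is a strictly decreasing bijection from $(0,\infty)$ onto $(-n-1,\infty)$. The condition $\frac{1}{\lambda}-n-1=m$ with $m\in\No$ is equivalent to $\lambda=\frac{1}{n+1+m}$, and this value is positive. The set of admissible $\lambda$ is therefore exactly $\bigl\{\tfrac{1}{n+1+m}:m\in\No\bigr\}$. As a consistency check, $m=0$ gives $\lambda=\frac1{n+1}$, where $\Tdeg{n}\equiv1$ by Theorem~\ref{thm:monotonicity-threshold}, which is indeed absolutely monotonic.

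\textbf{Size of the set.} Since $m\mapsto \frac{1}{n+1+m}$ is injective on $\No$, the set is countably infinite. For the measure, cover the $m$-th point by an interval of length $\varepsilon 2^{-m-1}$. The total length is at most $\varepsilon$, and $\varepsilon>0$ is arbitrary, so the set has Lebesgue measure zero; alternatively, one can simply cite countable subadditivity. The complement of this set in $(0,\infty)$ is then of full measure, and by the ``only if'' direction of Theorem~\ref{thm:abs-monotonicity}, $\Tdeg{n}$ fails to be absolutely monotonic at every $\lambda$ in the complement. This gives the almost-everywhere statement.

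There is no substantive obstacle. The only point requiring care is that the exceptional values $\lambda\in\Lambda_n$ are handled by the continuation in Remark~\ref{rem:analytic-continuation}. This matters because some of the admissible points fall in $\Lambda_n$ or near it. For example, when $n\ge1$, is $\frac1{n+1}$ itself outside $\Lambda_n$? Yes: $\Lambda_n$ consists of $1/k$ with $k\le n$, so $\frac{1}{n+1}\notin\Lambda_n$, and in fact every admissible point $\frac{1}{n+1+m}$ lies outside $\Lambda_n$. Either way, the argument does not need to depend on this, since Theorem~\ref{thm:abs-monotonicity} is stated for all $\lambda>0$.
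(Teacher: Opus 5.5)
Your proposal is correct and is essentially the paper's proof: you invoke Theorem~\ref{thm:abs-monotonicity}, solve $\frac{1}{\lambda}-n-1=m\in\No$ to get $\lambda=\frac{1}{n+1+m}$, and observe that this countable set is Lebesgue-null. Your extra details (injectivity in $m$, the covering argument, the check that no admissible point lies in $\Lambda_n$) are accurate, but your passing phrase that some admissible points ``fall in $\Lambda_n$'' is contradicted by your own correct verification and should be dropped.
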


\begin{proof}
  By Theorem~\ref{thm:abs-monotonicity}, absolute monotonicity holds
  iff $\alpha=1/\lambda-n-1\in\No$, i.e.\ $\lambda=1/(n+1+m)$ for
  some $m\in\No$. This countable set has Lebesgue measure zero.
\end{proof}

\begin{remark}
  The set $\{1/(n+1+m):m\in\No\}$ is a strictly decreasing
  sequence converging to $0$, so it is a discrete subset of
  $(0,\infty)$ with single accumulation point $\lambda=0$.
  This stands in sharp contrast to the classical case: for
  $T_n[\e^u]$, absolute monotonicity holds for every
  $n\in\No$ and every $\lambda>0$ (Qi~\cite{Qi2024}), with no
  restriction whatsoever on parameters. The degenerate analogue
  thus exhibits a qualitative phase transition: a dense open
  set of parameters (the complement of our measure-zero set)
  fails absolute monotonicity entirely. The discreteness is
  intrinsic: it arises from the requirement that
  $1/\lambda-n-1\in\No$, a Diophantine-type condition that
  generically fails, and that is consistent with the recovery
  of all classical properties as $\lambda\to0^+$
  (Theorem~\ref{thm:classical-limit}).
\end{remark}

%% ==================================================================
\section{Sharp Error Bounds}\label{sec:bounds}
%% ==================================================================

\begin{theorem}[Two-sided bounds; pointwise sharp at origin,
  not asymptotically sharp]%
  \label{thm:bounds}
  Let $\alpha_n=\tfrac{1}{\lambda}-n-1$.
  \begin{enumerate}
  \item If $\lambda=\frac{1}{n+1}$ (i.e.\ $\alpha_n=0$), then $\Tdeg{n}(u)\equiv1$.

  \item \textbf{For $u>0$:}
    \begin{enumerate}
    \item If $\lambda<\frac{1}{n+1}$ (i.e.\ $\alpha_n>0$), then
      $\;1\le\Tdeg{n}(u)\le(1+\lambda u)^{\alpha_n}$.
    \item If $\lambda>\frac{1}{n+1}$ (i.e.\ $\alpha_n<0$), then
      $\;(1+\lambda u)^{\alpha_n}\le\Tdeg{n}(u)\le1$.
    \end{enumerate}

  \item \textbf{For $u\in(-1/\lambda,0)$:}
    \begin{enumerate}
    \item If $\lambda<\frac{1}{n+1}$ (i.e.\ $\alpha_n>0$), then
      $\;(1+\lambda u)^{\alpha_n}\le\Tdeg{n}(u)\le1$.
    \item If $\lambda>\frac{1}{n+1}$ (i.e.\ $\alpha_n<0$), then
      $\;1\le\Tdeg{n}(u)\le(1+\lambda u)^{\alpha_n}$.
    \end{enumerate}
  \end{enumerate}
\end{theorem}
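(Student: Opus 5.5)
The whole statement follows from the integral representation \eqref{eq:int-rep-formula} of Theorem~\ref{thm:integral-rep},
\[
\Tdeg{n}(u)=(n+1)\int_0^1(1-v)^n(1+\lambda uv)^{\alpha_n}\,\dee v ,
\]
combined with a pointwise sandwich of the factor $(1+\lambda uv)^{\alpha_n}$ for $v\in[0,1]$. Case~1 is exactly Case~2 of Theorem~\ref{thm:monotonicity-threshold}, so nothing new is needed there.

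For the remaining cases, fix $u\in(-1/\lambda,\infty)$ with $u\neq0$. The map $v\mapsto 1+\lambda uv$ is linear and monotone on $[0,1]$, so $1+\lambda uv$ lies between $1$ (at $v=0$) and $1+\lambda u$ (at $v=1$). Both endpoints are positive on the domain, as in Lemma~\ref{lem:domination}.

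Since $s\mapsto s^{\alpha_n}$ is monotone on $(0,\infty)$, the value $(1+\lambda uv)^{\alpha_n}$ lies between $1$ and $(1+\lambda u)^{\alpha_n}$. The ordering of these two numbers is determined by two signs:
\begin{itemize}
\item the sign of $u$, which says whether $1+\lambda u$ is greater or less than $1$;
\item the sign of $\alpha_n$, which says whether $s\mapsto s^{\alpha_n}$ is increasing or decreasing.
\end{itemize}
Concretely:
\begin{itemize}
\item if $u>0$ and $\alpha_n>0$, then $1\le(1+\lambda uv)^{\alpha_n}\le(1+\lambda u)^{\alpha_n}$;
\item if $u>0$ and $\alpha_n<0$, the inequalities reverse;
\item if $u<0$, so that $0<1+\lambda u<1$, the roles swap in the same way.
\end{itemize}
This yields exactly the four orderings listed in Cases~2(a), 2(b), 3(a), 3(b).

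Next, multiply the pointwise inequality by the nonnegative weight $(n+1)(1-v)^n$ and integrate over $[0,1]$. The Beta identity $(n+1)\int_0^1(1-v)^n\,\dee v=1$, already used in Theorem~\ref{thm:integral-rep}, turns the constant bounds $1$ and $(1+\lambda u)^{\alpha_n}$ into the same constants after integration. This gives all the stated two-sided bounds.

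An alternative check is available for the side involving $1$: it also follows from the monotonicity classification in Theorem~\ref{thm:monotonicity-threshold} together with $\Tdeg{n}(0)=1$.

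No step here is a real obstacle. The only care needed is the sign bookkeeping in the four cases, and confirming that $1+\lambda uv>0$ so the power function is monotone on the relevant range. If strictness were wanted, the pointwise inequality is strict for $v\in(0,1]$ when $\alpha_n\ne0$ and $u\ne0$, which would make all bounds strict. Pointwise sharpness at the origin holds because both bounds equal $1$ at $u=0$.
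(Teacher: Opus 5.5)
Your proposal is correct and follows the paper's proof essentially step for step: the integral representation~\eqref{eq:int-rep-formula}, the pointwise sandwich of $(1+\lambda uv)^{\alpha_n}$ between $1$ and $(1+\lambda u)^{\alpha_n}$ according to the signs of $u$ and $\alpha_n$, and integration against the probability weight $(n+1)(1-v)^n\,\dee v$. One small slip in your optional strictness remark: the pointwise inequality is strict only for $v\in(0,1)$, since at $v=1$ the factor equals $(1+\lambda u)^{\alpha_n}$, but this still gives strict bounds whenever $u\ne0$ and $\alpha_n\ne0$.
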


\begin{proof}
  From~\eqref{eq:int-rep-formula},
  $\Tdeg{n}(u)=(n+1)\int_0^1(1-v)^n(1+\lambda uv)^{\alpha_n}\dee v$.
  The weight $(n+1)(1-v)^n\dee v$ is a probability measure on
  $[0,1]$ (since $(n+1)\int_0^1(1-v)^n\dee v=1$).
\begin{enumerate}
\item For $\alpha_n=0$, $(1+\lambda uv)^0=1$ so $\Tdeg{n}(u)=1$.
\item\begin{enumerate}
\item For $u>0$, $v\in[0,1]$: $1\le1+\lambda uv\le1+\lambda u$. Since
  $\alpha_n>0$ the map $s\mapsto s^{\alpha_n}$ is increasing, giving
  $1\le(1+\lambda uv)^{\alpha_n}\le(1+\lambda u)^{\alpha_n}$.
  Integrating against the probability measure $(n+1)(1-v)^n\dee v$
  (which preserves weak inequalities between constants and
  functions of $v$) gives $1\le\Tdeg{n}(u)\le(1+\lambda u)^{\alpha_n}$.
\item Same chain, but $\alpha_n<0$ reverses it:
  $(1+\lambda u)^{\alpha_n}\le(1+\lambda uv)^{\alpha_n}\le1$, hence
  $(1+\lambda u)^{\alpha_n}\le\Tdeg{n}(u)\le1$.
\end{enumerate}
\item\begin{enumerate}
\item For $u<0$, $v\in[0,1]$: $\lambda u\le\lambda uv\le0$, so
  $0<1+\lambda u\le1+\lambda uv\le1$ (using $u>-1/\lambda$).
  Since $\alpha_n>0$:
  $(1+\lambda u)^{\alpha_n}\le(1+\lambda uv)^{\alpha_n}\le1$,
  hence $(1+\lambda u)^{\alpha_n}\le\Tdeg{n}(u)\le1$.
\item Same chain with $\alpha_n<0$:
  $1\le(1+\lambda uv)^{\alpha_n}\le(1+\lambda u)^{\alpha_n}$,
  hence $1\le\Tdeg{n}(u)\le(1+\lambda u)^{\alpha_n}$.
\end{enumerate}
\end{enumerate}
\end{proof}

\begin{remark}
  At $u=0$ all three quantities equal $1$ in every case, confirming
  that the bounds are pointwise sharp at the origin. However, they
  are not asymptotically sharp as $u\to\infty$. By the asymptotic
  $\Tdeg{n}(u)\sim(n+1)G_n(\lambda u)^{\alpha_n}$ (where
  $G_n=B(n+1,\alpha_n+1)$) we have
  \[
    \lim_{u\to\infty}\frac{\Tdeg{n}(u)}{(1+\lambda u)^{\alpha_n}}
    =(n+1)\,B(n+1,\alpha_n+1)
    =\frac{(n+1)!\,\Gamma(\alpha_n+1)}{\Gamma(n+\alpha_n+2)}<1
  \]
  for every $n\ge0$ and $\alpha_n>0$. Hence the upper bound in
  Case~2(a) overestimates $\Tdeg{n}(u)$ by a constant factor as
  $u\to\infty$.
\end{remark}

\begin{corollary}[Explicit truncation-error bounds for $\expdeg$]
  \label{cor:truncation-error}
  Let $n\in\No$ and $\lambda\in(0,1/(n+1))$, so that
  $\alpha_n>0$. Then:
  \begin{enumerate}
  \item For $u>0$,
    $\;\dfrac{\dfact{1}{n+1}}{(n+1)!}\,u^{n+1}
      \le \Rdeg{n}(u) \le
      \dfrac{\dfact{1}{n+1}}{(n+1)!}\,u^{n+1}(1+\lambda u)^{\alpha_n}$.
  \item For $u\in(-1/\lambda,0)$ with $n$ odd
    (so $n+1$ is even and $u^{n+1}>0$),
    $\;\dfrac{\dfact{1}{n+1}}{(n+1)!}\,u^{n+1}(1+\lambda u)^{\alpha_n}
      \le \Rdeg{n}(u) \le
      \dfrac{\dfact{1}{n+1}}{(n+1)!}\,u^{n+1}$.
  \item For $u\in(-1/\lambda,0)$ with $n$ even
    (so $n+1$ is odd and $u^{n+1}<0$),
    $\;\dfrac{\dfact{1}{n+1}}{(n+1)!}\,u^{n+1}
      \le \Rdeg{n}(u) \le
      \dfrac{\dfact{1}{n+1}}{(n+1)!}\,u^{n+1}(1+\lambda u)^{\alpha_n}$.
  \end{enumerate}
\end{corollary}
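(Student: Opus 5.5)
The plan is to multiply the bounds of Theorem~\ref{thm:bounds}, Case~2(a) and Case~3(a), by the prefactor $c_n:=\dfact{1}{n+1}/(n+1)!$ and by $u^{n+1}$. The link between the two is the identity $\Rdeg{n}(u)=c_n\,u^{n+1}\,\Tdeg{n}(u)$ from Definition~\ref{def:degenerate-remainder}, valid for $u\ne0$. First I need $\lambda\notin\Lambda_n$ so that the identity applies. Every element of $\Lambda_n$ is of the form $1/k$ with $1\le k\le n$, so it is at least $1/n$, which exceeds $1/(n+1)$; hence $\lambda<1/(n+1)$ excludes $\Lambda_n$. For the same reason every factor $1-k\lambda$ with $0\le k\le n$ satisfies $1-k\lambda\ge 1-n\lambda>0$. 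Therefore $c_n>0$, and multiplying an inequality by $c_n$ preserves its direction.

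For $u>0$ we have $u^{n+1}>0$, so the positive factor $c_nu^{n+1}$ can be applied to Case~2(a), $1\le\Tdeg{n}(u)\le(1+\lambda u)^{\alpha_n}$, with no change of direction. This gives item~1 directly.

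For $u\in(-1/\lambda,0)$, Case~3(a) gives $(1+\lambda u)^{\alpha_n}\le\Tdeg{n}(u)\le1$. The sign of $c_nu^{n+1}$ then depends on the parity of $n$. If $n$ is odd, $u^{n+1}>0$, the directions are preserved, and we get item~2. If $n$ is even, $u^{n+1}<0$, both inequalities flip, and we get
\[
c_nu^{n+1}\le \Rdeg{n}(u)\le c_nu^{n+1}(1+\lambda u)^{\alpha_n},
\]
which is item~3.

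I do not expect a real obstacle here. The only points needing care are confirming $c_n>0$, which does the bookkeeping, and tracking the sign reversal when $u^{n+1}<0$.
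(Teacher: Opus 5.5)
Your proposal is correct and follows the paper's proof: multiply the bounds of Theorem~\ref{thm:bounds}, Cases~2(a) and~3(a), by the positive constant $\dfact{1}{n+1}/(n+1)!$ times $u^{n+1}$, and track the sign flip when $n$ is even and $u<0$. You also check explicitly that $\lambda<1/(n+1)$ excludes $\Lambda_n$, so the identity from Definition~\ref{def:degenerate-remainder} applies; the paper leaves this step implicit.
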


\begin{proof}
 For $\lambda\in(0,1/(n+1))$, every factor in
  $\dfact{1}{n+1}=\prod_{k=0}^{n}(1-k\lambda)$ is positive, hence $\dfact{1}{n+1}>0$. By Definition~\ref{def:degenerate-remainder},
  for $u\ne0$, we have
  \[
    \Rdeg{n}(u)=\frac{\dfact{1}{n+1}}{(n+1)!}\,u^{n+1}\,\Tdeg{n}(u).
  \]
\begin{enumerate}
\item Here $u^{n+1}>0$, so
  $\dfact{1}{n+1}u^{n+1}/(n+1)!>0$. Multiplying the bounds of
  Theorem~\ref{thm:bounds}, Case~(2a) by this positive quantity preserves
  both inequalities, giving the stated bound.
\item Theorem~\ref{thm:bounds},
  Case~(3a) gives $(1+\lambda u)^{\alpha_n}\le\Tdeg{n}(u)\le1$.
  If $n$ is odd then $n+1$ is even, so $u^{n+1}>0$ for $u<0$.
  Since $\dfact{1}{n+1}>0$, multiplying by the positive factor
  $\dfact{1}{n+1}u^{n+1}/(n+1)!$ preserves the inequality direction,
  giving Case~(2).
\item If $n$ is even, $u^{n+1}<0$ and multiplying
  reverses the inequality direction, giving Case~(3).
\end{enumerate}
\end{proof}

\begin{remark}
  Corollary~\ref{cor:truncation-error} provides explicit two-sided
  bounds on the Maclaurin truncation error
  $\Rdeg{n}(u)=\e_\lambda(u)-\sum_{j=0}^n\dfact{1}{j}\,u^j/j!$,
  connecting the normalized-remainder analysis to the
  approximation-theoretic question of how accurately the first
  $n+1$ terms approximate $\e_\lambda(u)$.
\end{remark}

%% ==================================================================
\section{Classical Limit}\label{sec:classical-limit}
%% ==================================================================

\begin{theorem}[Classical limit]\label{thm:classical-limit}
  For each fixed $n\in\No$ and $u\in\R$,
  \[
    \lim_{\lambda\to0^{+}}\Tdeg{n}(u)
    =T_n[\e^{u}]
    :=(n+1)\int_0^1(1-v)^n\e^{uv}\dee v.
  \]
  Moreover, for all $\lambda\in(0,1/(n+1))$, $\Tdeg{n}(u)$ is
  strictly increasing in $u$.
\end{theorem}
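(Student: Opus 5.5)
The plan is to work directly from the integral representation \eqref{eq:int-rep-formula}, which by Remark~\ref{rem:primary-defn} defines $\Tdeg{n}(u)$ for every $\lambda>0$. Fix $n$ and $u\in\R$. For $\lambda$ small enough, say $0<\lambda<\lambda_0(u)$ with $\lambda_0|u|<1/2$, the point $u$ lies in $(-1/\lambda,\infty)$ and $1+\lambda uv\ge 1/2$ for all $v\in[0,1]$. We will then show that
\[
g_\lambda(v):=(1-v)^n(1+\lambda uv)^{1/\lambda-n-1}\to(1-v)^n\e^{uv}
\]
pointwise on $[0,1]$, and conclude with dominated convergence.

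\textbf{Pointwise convergence.} Write
\[
(1+\lambda uv)^{1/\lambda-n-1}=\exp\Bigl(\bigl(\tfrac1\lambda-n-1\bigr)\ln(1+\lambda uv)\Bigr).
\]
Since $\ln(1+x)=x+O(x^2)$ as $x\to0$, the exponent equals $uv+O(\lambda)$, uniformly for $v\in[0,1]$ because $|uv|\le|u|$. Hence $g_\lambda\to(1-v)^n\e^{uv}$, and in fact the convergence is uniform on $[0,1]$.

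\textbf{Domination.} Uniform convergence already justifies passing to the limit under the integral. For an explicit dominating function, use $\ln(1+x)\le x$. When $1/\lambda-n-1\ge0$, which holds for small $\lambda$, this gives an exponent at most $(1/\lambda-n-1)\lambda|u|\le|u|$. So $|g_\lambda(v)|\le\e^{|u|}$, an integrable constant. Applying dominated convergence to $(n+1)\int_0^1 g_\lambda\,\dee v$ yields the stated limit $T_n[\e^u]$.

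\textbf{Monotonicity.} The second assertion is immediate from Case~1 of Theorem~\ref{thm:monotonicity-threshold}. For $\lambda<1/(n+1)$, the derivative is the positive prefactor $(n+1)(1-(n+1)\lambda)$ times a strictly positive integral, so $\Tdeg{n}$ is strictly increasing on $(-1/\lambda,\infty)$. Since the statement is posed for $u\in\R$, one should note that the relevant domain is $(-1/\lambda,\infty)$. This domain exhausts $\R$ as $\lambda\to0^+$, which is exactly why fixing $u$ first and then shrinking $\lambda$ is legitimate.

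The main obstacle is only bookkeeping: making the uniform $O(\lambda)$ bound on the exponent rigorous, including negative $u$, where $\ln(1+\lambda uv)<0$ and the factor $1/\lambda$ amplifies it. Restricting to $\lambda|u|\le1/2$ handles this. On that range $|\ln(1+x)-x|\le x^2$, so the error in the exponent is at most $(1/\lambda)\lambda^2u^2+(n+1)\lambda|u|\cdot 2=O(\lambda)$. This makes the convergence uniform, and dominated convergence, or simply uniform convergence on a compact interval, finishes the proof.
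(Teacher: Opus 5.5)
Your proposal is correct and follows essentially the same route as the paper. Both use the integral representation, pointwise convergence of $(1-v)^n(1+\lambda uv)^{1/\lambda-n-1}$ to $(1-v)^n\e^{uv}$, dominated convergence with a bound obtained from $\ln(1+x)\le x$, and Case~1 of Theorem~\ref{thm:monotonicity-threshold} for monotonicity. The only differences are cosmetic: you apply $\ln(1+x)\le x$ to the whole nonnegative exponent $1/\lambda-n-1$ to get the dominating constant $\e^{|u|}$ directly, whereas the paper bounds $(1+\lambda uv)^{1/\lambda}\le\e^{|u|}$ and $(1+\lambda uv)^{-n-1}\le(4/3)^{n+1}$ separately, and you also extract uniform convergence with an explicit $O(\lambda)$ error.
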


\begin{proof}
  Fix $u\in\R$ and choose
  $\lambda_0:=\min\!\bigl(\frac{1}{4(n+2)},\frac{1}{4|u|+1}\bigr)>0$.
  For all $\lambda\in(0,\lambda_0]$ and $v\in[0,1]$:
  $|\lambda uv|\le\lambda_0|u|\le\frac{|u|}{4|u|+1}<\frac{1}{4}$,
  so
  \begin{equation}\label{eq:lb-v9}
    1+\lambda uv\ge1-|\lambda uv|\ge\tfrac{3}{4}>0.
  \end{equation}
  The standard inequality $\ln(1+x)\le x$ for $x>-1$, applied to
  $x=\lambda uv$ (valid since $1+\lambda uv\ge3/4>0$), gives
  \[
    \tfrac{1}{\lambda}\ln(1+\lambda uv)\le uv,
    \quad\text{hence}\quad
    (1+\lambda uv)^{1/\lambda}\le\e^{uv}\le\e^{|u|}.
  \]
  Combined with $(1+\lambda uv)^{-n-1}\le(3/4)^{-(n+1)}=(4/3)^{n+1}
  =:C_n$, we obtain the dominating bound
  \[
    g_\lambda(v):=(1-v)^n(1+\lambda uv)^{1/\lambda-n-1}
    \le C_n\e^{|u|},
  \]
  valid for all $\lambda\in(0,\lambda_0]$ and $v\in[0,1]$.

  By the sequential characterization of limits, fix any sequence
  $\lambda_k\to0^+$ with $\lambda_k\in(0,\lambda_0]$. Then
  $g_{\lambda_k}(v)\to(1-v)^n\e^{uv}$ pointwise (using
  $(1+\lambda_k uv)^{1/\lambda_k}\to\e^{uv}$), and the domination
  bound is uniform in $k$. By dominated convergence,
  \[
    \lim_{\lambda\to0^{+}}\Tdeg{n}(u)
    =(n+1)\int_0^1(1-v)^n\e^{uv}\dee v=T_n[\e^{u}].
  \]
  Strict monotonicity for $\lambda\in(0,1/(n+1))$ follows from
  Theorem~\ref{thm:monotonicity-threshold}(1).
\end{proof}

%% ==================================================================
\section{Conclusion}
\label{sec:conclusion}
%% ==================================================================

We have studied the normalized remainder $\Tdeg{n}$ for the degenerate
exponential function and revealed a rich structure of threshold
phenomena with no classical parallel. While the classical normalized
remainder $T_n[\e^u]$ is globally increasing and globally
logarithmically convex on $(0,\infty)$ for every $n$
(Li--Qi~\cite{LiQi2025}), the degenerate remainder transitions
sharply between increasing and decreasing at $\lambda=1/(n+1)$, and
is never globally logarithmically convex for any $\lambda$ in the
increasing regime, no matter how small
(Corollary~\ref{cor:unified-log-concave}).
The underlying mechanism is a qualitative change in the remainder's
asymptotic growth: where the classical remainder grows like $\e^u$,
the degenerate one grows like a power $u^\alpha$, and this power-type
behavior is incompatible with global logarithmic convexity. Absolute
monotonicity is similarly restricted, holding only for a countable,
measure-zero set of degeneracy parameters
(Corollary~\ref{cor:ae-failure}). The two-sided bounds of
Theorem~\ref{thm:bounds} yield explicit truncation-error estimates
for the degenerate exponential series
(Corollary~\ref{cor:truncation-error}).

To summarise the main theorem chain: the integral representation
(Theorem~\ref{thm:integral-rep}) is the single technical foundation
from which all main results follow. From it, the monotonicity
threshold (Theorem~\ref{thm:monotonicity-threshold}), the local
log-convexity failure (Theorem~\ref{thm:log-convex-failure}), and
the asymptotic global failure (Theorem~\ref{thm:asymptotic-log-concave})
all follow by differentiating under the integral sign and extracting
the leading power-type asymptotics via Lemma~\ref{lem:asymptotic-factor}.
The absolute monotonicity classification (Theorem~\ref{thm:abs-monotonicity})
and the two-sided bounds (Theorem~\ref{thm:bounds}) are further
independent consequences of the same representation.

The case $\lambda<0$, where the domain is left-unbounded and the
asymptotic behavior is governed by a boundary limit rather than
$u\to\infty$, presents different technical challenges and will be
taken up separately. More generally, it would be interesting to
know whether the power-type asymptotic growth that drives our main
results is a broader feature of normalized remainders for degenerate
special functions built from $\expdeg$.

\section*{Declaration of Competing Interest}
The authors declare that they have no known competing financial interests or personal relationships that could have appeared to influence the work reported in this paper.

\section*{Funding}
This research did not receive any specific grant from funding agencies in the public, commercial, or not-for-profit sectors.

%% ==================================================================


\begin{thebibliography}{99}
%% ==================================================================


\bibitem{BaoAgarwalQiDu2024}
Z.-H.~Bao, R.~P.~Agarwal, F.~Qi, and W.-S.~Du,
\emph{Some properties on normalized tails of Maclaurin power series expansion of exponential function},
Symmetry \textbf{16} (2024), no.~8, Article 989, 15~pp.


\bibitem{Carlitz1956}
L.~Carlitz,
\emph{A degenerate Staudt-Clausen theorem},
Arch.\ Math.\ (Basel) \textbf{7} (1956), 28--33.

\bibitem{Carlitz1979}
L.~Carlitz,
\emph{Degenerate Stirling, Bernoulli and Eulerian numbers},
Utilitas Math.\ \textbf{15} (1979), 51--88.

\bibitem{Cesarano2024}
C.~Cesarano, Y.~Quintana, and W.~Ramirez,
\emph{Degenerate versions of hypergeometric Bernoulli-Euler
  polynomials},
Lobachevskii J.\ Math.\ \textbf{45} (2024), 3509--3520.

\bibitem{ChenKimKimLeeLee2024}
L.~Chen, T.~Kim, D.~S.~Kim, H.~Lee, and S.-H.~Lee,
\emph{Probabilistic degenerate central Bell polynomials},
Math.\ Comput.\ Model.\ Dyn.\ Syst.\ \textbf{30} (2024), no.~1,
  523--542.

\bibitem{HernandezPeraltaQuintana2023}
J.~Hern\'andez, D.~Peralta, and Y.~Quintana,
\emph{A look at generalized degenerate Bernoulli and Euler matrices},
Mathematics \textbf{11} (2023), no.~12, Article 2731, 17~pp.

\bibitem{KimKim2017Laplace}
T.~Kim and D.~S.~Kim,
\emph{Degenerate Laplace transform and degenerate gamma function},
Russ.\ J.\ Math.\ Phys.\ \textbf{24} (2017), no.~2, 241--248.

\bibitem{KimKim2020JMAA}
T.~Kim and D.~S.~Kim,
\emph{Degenerate polyexponential functions and degenerate Bell
  polynomials},
J.\ Math.\ Anal.\ Appl.\ \textbf{487} (2020), no.~2, Article
  124017, 15~pp.

\bibitem{KimKimKwonLee2020}
T.~Kim, D.~S.~Kim, J.~Kwon, and H.~Lee,
\emph{Degenerate polyexponential functions and type~2 degenerate
  poly-Bernoulli numbers and polynomials},
Adv.\ Difference Equ.\ \textbf{2020} (2020), Paper No.~168.

\bibitem{KimKim2021Sheffer}
D.~S.~Kim and T.~Kim,
\emph{Degenerate Sheffer sequence and $\lambda$-Sheffer sequence},
J.\ Math.\ Anal.\ Appl.\ \textbf{493} (2021), no.~1, Article
  124521, 17~pp.

\bibitem{KimKim2022}
T.~Kim and D.~S.~Kim,
\emph{On some degenerate differential and degenerate difference
  operators},
Russ.\ J.\ Math.\ Phys.\ \textbf{29} (2022), 37--46.


\bibitem{LiQi2025}
Y.-W.~Li and F.~Qi,
\emph{Elegant proofs for properties of normalized remainders of
  Maclaurin power series expansion of exponential function},
Mathematica Slovaca \textbf{75} (2025), no.~5, 1035--1044.

\bibitem{LiuLongQi2025}
X.-L.~Liu, H.-X.~Long, and F.~Qi,
\emph{Monotonicity results of ratio between two normalized remainders
  of Maclaurin series expansion for square of tangent function},
Mathematica Slovaca \textbf{75} (2025), no.~3, 699--705.

\bibitem{Qi2024}
F.~Qi,
\emph{Absolute monotonicity of normalized tail of power series
  expansion of exponential function},
Mathematics \textbf{12} (2024), no.~18, Article 2859, 11~pp.

\bibitem{QiSurvey2025}
F.~Qi,
\emph{Theory of normalized remainders in Taylor series expansions},
arXiv preprint arXiv:2512.15797 (2025; revised 2026).

\bibitem{QiGuo2021JMAA}
F.~Qi and B.-N.~Guo,
\emph{From inequalities involving exponential functions and sums to
  logarithmically complete monotonicity of ratios of gamma functions},
J.\ Math.\ Anal.\ Appl.\ \textbf{493} (2021), no.~1, Article
  124478, 19~pp.

\bibitem{WanQi2024}
A.~Wan and F.~Qi,
\emph{Power series expansion, decreasing property, and concavity
  related to logarithm of normalized tail of power series expansion
  of cosine},
Electron.\ Res.\ Arch.\ \textbf{32} (2024), no.~5, 3130--3144.

\bibitem{WangQi2024Sine}
F.~Wang and F.~Qi,
\emph{Power series expansion and decreasing property related to
  normalized remainders of power series expansion of sine},
Filomat \textbf{38} (2024), no.~29, 10447--10462.

\bibitem{Widder1941}
D.~V.~Widder,
\emph{The Laplace Transform},
Princeton Mathematical Series, vol.~6,
Princeton University Press, Princeton, NJ, 1941.

\bibitem{ZhangQi2025Tan}
G.-Z.~Zhang and F.~Qi,
\emph{On convexity and power series expansion for logarithm of normalized tail of power series expansion for square of tangent},
J.\ Math.\ Inequal.\ \textbf{18} (2024), no.~3, 937--952.

\bibitem{ZhangYangQi2025}
G.-Z.~Zhang, Z.-H.~Yang, and F.~Qi,
\emph{On normalized tails of series expansion of generating
  function of Bernoulli numbers},
Proc.\ Amer.\ Math.\ Soc.\ \textbf{153} (2025), no.~1, 131--141.

\bibitem{ZhangQi2025}
T.~Zhang and F.~Qi,
\emph{Decreasing ratio between two normalized remainders of
  Maclaurin series expansion of exponential function},
AIMS Math.\ \textbf{10} (2025), no.~6, 14739--14756.

\bibitem{ZhangQi2025Gamma}
J.~Zhang and F.~Qi,
\emph{Some properties of normalized remainders of the Maclaurin
  expansion for a function originating from an integral representation
  of the reciprocal of the gamma function},
Math.\ Inequal.\ Appl.\ \textbf{28} (2025), no.~2, 343--354.

\end{thebibliography}
\end{document}